\setlist{nolistsep}
\numberwithin{equation}{section}
\numberwithin{figure}{section}
\numberwithin{table}{section}
\theoremstyle{definition}
\newtheorem{thm}{Theorem}[section]
\newtheorem{lem}[thm]{Lemma}
\newtheorem{prop}[thm]{Proposition}
\newtheorem{rem}[thm]{Remark}
\newcommand{\thmref}[1]{Theorem \ref{#1}}
\newcommand*{\hilbsp}{\mathcal{H}}
\newcommand*{\boundop}{B(\hilbsp)}
\newcommand*{\nest}{\mathcal{L}}
\newcommand*{\nestalg}{\mathcal{T(\nest)}}
\newcommand*{\nats}{\mathbb{N}}
\newcommand*{\tens}[2]{#1 \otimes #2}
\newcommand*{\module}{\mathcal{M}}
\newcommand*{\inner}[2]{\langle #1,#2 \rangle}
\newcommand*{\union}{\bigcup}
\newcommand*{\norm}[1]{||#1||}
\newcommand*{\ortho}[1]{#1^\perp}
\newcommand*{\jproj}[1]{\hat{#1}}
\newcommand*{\lieprod}[2]{[#1,#2]}
\newcommand*{\llinner}[1]{\Omega_T}
\newcommand*{\xinn}{\psi_T}
\newcommand*{\yinn}{\varphi_T}
\newcommand{\F}{\mbox{${\mathcal F}$}}
\renewcommand{\H}{\mbox{${\mathcal H}$}}
\newcommand{\M}{\mbox{${\mathcal M}$}}
\newcommand{\N}{\mbox{${\mathcal L}$}}
\renewcommand{\S}{\mbox{${\mathcal S}$}}
\newcommand{\T}{\mbox{${\mathcal T}$}}
\newcommand{\X}{\mbox{${\mathcal X}$}}
\newcommand{\Bil}{\operatorname{BIL}}
\DeclareMathOperator{\image}{\text{im}}
\newcommand*{\lt}{\mathcal{L}}
\title[Kernel maps and operator decomposition]{Kernel maps and operator decomposition}
\author[G. Matos]{Gabriel Matos}
\address{Instituto Superior T\'ecnico, Universidade de Lisboa\\
Av. Rovisco Pais, 1049-001 Lisboa, Portugal}
\email{gabriel.matos@tecnico.ulisboa.pt}
\author[L. Oliveira]{Lina Oliveira}
\address{Center for Mathematical Analysis\\
Geometry and Dynamical Systems\\
{\sl{and}}
Department of Mathematics\\
Instituto Superior T\'ecnico \\ 
Universidade de Lisboa\\
Av. Rovisco Pais\\
1049-001 Lisboa, Portugal}
\email{linaoliv@math.tecnico.ulisboa.pt}
\thanks{The first author was supported by a Calouste Gulbenkian Foundation grant through the \emph{Novos Talentos em Matem\'atica} program. The second author was partially supported by FCT/Portugal grants UID/MAT/04459/2013 and EXCL/MAT-GEO/0222/2012} 
\subjclass[2010]{47A15,
 47L35, 
  17B60}
\keywords{Bilattice, finite rank operator, kernel map, kernel set, Lie module, nest algebra.}
\begin{document}
\begin{abstract}We introduce the notions of kernel map and  kernel set of a bounded linear operator on a Hilbert space relative to a subspace lattice. The characterization of the kernel maps and kernel sets of finite rank operators 
 leads to showing that every norm closed Lie module of a continuous nest algebra is decomposable.
The continuity of the nest cannot be lifted, in general.
\end{abstract}

\maketitle

\section{Introduction}
The main concepts introduced and investigated  in the present work are the kernel map and the kernel set of an operator  relative to a subspace lattice (cf. Section  \ref{s_kernelmap}).  The results obtained are  subsequently applied to extend \cite{CL, O2} by showing  that every  norm closed Lie module of a continuous nest algebra is decomposable. 

The idea of linking sets of operators with lattices and bilattices has been present in the literature, e.g., \cite{Deg, Erdos, Had, Halmos, LS, ST} and, more recently,  \cite{BO}. However, this has been done mainly in connection with reflexivity which is not the approach here, since we are only interested in a single operator at a time.

 Given a subspace lattice $\nest$ on a Hilbert space $\H$, we show  that each bounded linear operator on $\H$ determines a {\sl kernel map}, from $\nest$ to  a bilattice contained in $\nest\times \nest^\perp$, and a corresponding {\sl kernel set}. The properties of the kernel map and kernel set are crucial in addressing the question of  decomposability central to  Section \ref{s_decomp}.

A set $\S$ of bounded linear operators on a Hilbert space is called {\sl decomposable} if each finite rank operator in $\S$ is the sum of finitely many rank-1 operators in $\S$.   
 Decomposability has been investigated in a wide variety of settings and it is well-known that, for example, nest algebras are decomposable \cite{ringrose1965} whilst this is not the case for CSL algebras \cite{HM, Long, LL}. It is also the case that norm closed modules of nest algebras are decomposable \cite{erdosdensity, EP} but  that  if, however, one considers algebraic structures other the associative ones, this can fail even for ideals. Indeed, Lie ideals of nest algebras might not  be decomposable unless the nest is continuous  \cite{O2} or has, at most,   one   atom which must be infinite dimensional   \cite{CL}. In this latter case \cite{CL}, the Hilbert space is supposed to be separable whilst the setting of the former \cite{O2} is general.

 In Section 3,  we apply the results and techniques developed in Section 2 to investigate the decomposability of norm closed Lie modules of nest algebras and are bound, of course, by the same limitations as in the case of Lie ideals. We show that every norm closed Lie module over   a continuous nest algebra is decomposable (cf. Theorem \ref{rankndecomp}) and, although the result itself might be seen as an extension of \cite{CL, O2}, it is in fact obtained using  an approach  quite distinct from those of \cite{CL, O2}. The hypothesis of the continuity of the nest cannot be dispensed with, in general.

We end this section establishing some notation and facts needed in the two subsequent sections.\\

Recall that the set ${\mathcal P}(\mathcal{H})$ of self-adjoint projections on a Hilbert space is naturally endowed with a partial order relation: given self-adjoint projections $P, Q$, define $P\leq Q$, if $PQ=P$. The set ${\mathcal P}(\mathcal{H})$ together with this  partial ordering is a \emph{lattice}, i.e., it is closed for binary infima and suprema.  Given $P, Q\in {\mathcal P}(\mathcal{H})$, the supremum $P\vee Q$ and the infimum $P\wedge Q$ are, respectively, the projection onto the closure of the span of the ranges of $P$ and $Q$ and the projection onto the intersection of those ranges.

A lattice $\nest$ of self-adjoint projections on $\H$ is said to be a \emph{subspace lattice} if it contains $0$, $I$ and is strongly closed.  
 Letting $P^\perp=I-P$, we define the subspace lattice $\nest^\perp$ by 
 $
 \nest^\perp=\{P^\perp\colon P\in \nest\}.
 $
  Subspace lattices are complete lattices.

A \emph{nest} is a totally ordered subspace lattice. The \emph{nest 
algebra} $\T(\N)$ associated 
with a nest $\N  $ is the subalgebra of all operators $T $ in $B  (\H )     $ such that, for all projections $P $ 
in $\N  $, 
 $T(P  (\H ))
\subseteq P  (\H ).
$
 For any   projection  $P$ in the nest $\N$, define the projection $P_{-}$
 (respectively, $P_{+}$) in $\N$  by  $P_{-} =\vee \{Q\in \N: Q<P\}$ 
(respectively, $P_{+} =\wedge \{Q\in \N: P<Q\}$). 
A \emph{continuous nest} is a nest where, for all $P\in\nest$, $P_-=P$. A nest algebra $\nestalg$ associated with a continuous nest is called a \emph{continuous nest algebra}. For more details on nest algebras, see \cite{davidsonbook,ringrose1965}.

Let $x$ and $y$ be elements of the Hilbert space $\hilbsp$ and  let $\tens{x}{y}$ be the rank one operator defined, for all $z$ in $\hilbsp$, by $z \to \inner{z}{y}x$, where $\inner{\cdot}{\cdot}$ denotes the inner product of $\hilbsp$. A rank one operator $\tens{x}{y}$ lies in $\nestalg$ if, and only if, there exists a projection $P$ such that $P_- y=0$ and $Px=x$. It follows that in a continuous nest algebra, $x$ and $y$ are mutually orthogonal.  It is shown in \cite{ringrose1965} that $P$ can be chosen to be equal to  $\wedge \{Q\in \nest: Qx=x\}$. 

In the sequel, Hilbert spaces are denoted by $\H$ and are all assumed to be complex and separable. By a slight abuse of notation, we shall not discriminate between a self-adjoint projection and its range. 

Let $\nest$ be a subspace lattice on $\H$. Generalizing \cite[Lemma 3.2]{O1},  for $x\in \H$, define  the projections  $P_{x,{\scriptsize \N}}$ and $\jproj{P}_{x,{\scriptsize \N}}$ by 
\begin{equation}\label{e33}
P_{x,{\scriptsize \N}} = \wedge\{P\in\nest:Px = x\},
\end{equation}  
\begin{equation}\label{e66}
\jproj{P}_{x,{\scriptsize \N}} = \vee\{P\in\nest:Px = 0\},
\end{equation} 
respectively. It is easily seen  that $\jproj{P}_{x,{\scriptsize \N}}x=0$ and $P_{x,{\scriptsize \N}}x=x$.
 In what follows, we shall omit the subscript ${\cdot}_{{\scriptsize \N}}$ whenever there is no risk of misunderstanding.

\begin{prop}\label{p_prelim} Let $\N$ be a nest in a Hilbert space $\H$ and let $S$ be a subset of  $\hilbsp$ not containing zero. The  following hold.
\begin{enumerate}[label=(\roman*)]
\item  Given   projections $N,P$ in $\N$ with $N<P$, there exist $x\in \H$ such that $P_x=P$ and $\hat{P}_x=N$.
\item  If, for all distinct $x,y \in S$, $P_x \neq P_y$, then $S$ is  a linearly independent set.
\end{enumerate}
\end{prop}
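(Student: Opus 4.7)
The two assertions admit independent treatments, and I would tackle (ii) first since it only uses that the nest is totally ordered. Suppose, for contradiction, that $S$ were linearly dependent, and pick a shortest linear relation $\sum_{i=1}^n c_i x_i = 0$ with $x_i \in S$ pairwise distinct and all $c_i \neq 0$. By hypothesis the projections $P_{x_i}$ are pairwise distinct, and since the nest is totally ordered I may relabel so that $P_{x_1} < P_{x_2} < \cdots < P_{x_n}$. Applying $P_{x_{n-1}}$ to the relation and using that $P_{x_i} \leq P_{x_{n-1}}$ gives $P_{x_{n-1}} x_i = x_i$ for $i<n$, I obtain $\sum_{i<n} c_i x_i + c_n P_{x_{n-1}} x_n = 0$; subtracting the original identity yields $c_n \bigl(x_n - P_{x_{n-1}} x_n\bigr) = 0$, i.e.\ $P_{x_{n-1}} x_n = x_n$. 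This forces $P_{x_n} \leq P_{x_{n-1}}$, contradicting $P_{x_{n-1}} < P_{x_n}$.

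For (i) my plan is to reduce the existence of $x$ to finding a separating vector for the restricted nest on $\mathcal{K} := (P-N)\hilbsp$. Denote by $\nest'$ the nest on $\mathcal{K}$ obtained by restricting the projections $Q \in \nest$ with $N \leq Q \leq P$ to $\mathcal{K}$. If $v \in \mathcal{K}$ satisfies $Qv \neq 0$ for every $Q \in \nest'$ with $Q > 0$ and $Qv \neq v$ for every $Q \in \nest'$ with $Q < I_{\mathcal{K}}$, then $x := v$ works: since $x \in \mathcal{K}$ one automatically has $Qx = 0$ for $Q \leq N$ and $Qx = x$ for $Q \geq P$, so the separating property of $v$ gives exactly $\hat P_x = N$ and $P_x = P$.

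I would build such a $v$ by a Baire category argument controlled by the lattice structure. Since $\mathcal{K}$ is separable, $\nest'$ admits a countable SOT-dense subset; augmenting it with the set of atoms of $\nest'$ (at most countably many, because the interval projections $Q - Q_-$ are pairwise orthogonal non-zero projections in a separable space), I obtain a countable family $\{Q_n\}$. For each $n$ with $Q_n \notin \{0, I_{\mathcal{K}}\}$, the subspaces $Q_n \mathcal{K}$ and $(I_{\mathcal{K}} - Q_n)\mathcal{K}$ are proper and closed in $\mathcal{K}$; by Baire's theorem the complement of their countable union is non-empty, so I pick $v$ there. The main obstacle is then upgrading this separation from $\{Q_n\}$ to every $Q \in \nest'$. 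I would handle it by cases: if $Qv = 0$ with $Q > 0$ and $Q_- < Q$, then $Q$ is an atom and hence some $Q_n$, giving an immediate contradiction; if instead $Q_- = Q$, using that every sequence in a totally ordered set has a monotone subsequence I can extract $Q_{n_k} \in \{Q_n\}$ with $0 < Q_{n_k} \uparrow Q$, and then $Q_{n_k} \leq Q$ forces $Q_{n_k} v = Q_{n_k} Q v = 0$, contradicting the construction of $v$. The case $Qv = v$ with $Q < I_{\mathcal{K}}$ is symmetric.
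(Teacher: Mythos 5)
Your proof of (ii) is correct and coincides in substance with the paper's: order the $P_{x_i}$, apply a projection that fixes every vector except the top one, and conclude that the top coefficient vanishes (the paper phrases this as an induction and applies $P_{x_n}^\perp$ rather than $P_{x_{n-1}}$ to a minimal dependence relation, but the mechanism is the same). For (i) you take a genuinely different route. The paper is constructive: it first produces, for each $P\in\N$, a vector $x$ with $P_x=P$ --- directly from $P\ominus P_-$ when $P_-<P$, and otherwise as a convergent series $\sum_n n^{-2}x_n$ with $x_n\in Q_n\ominus Q_{n-1}$ along a strictly increasing sequence $(Q_n)$ with $\vee Q_n=P$ (this is where separability and \cite{BCW} enter); it then dualizes to $\N^\perp$ to control $\hat{P}_x$, and finally splices two such vectors via $x=(P-N)z+(\hat{P}_{(P-N)z}-N)w$. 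Your reduction to a separating vector for the compressed nest $\nest'$ on $(P-N)\H$ is correct, and the Baire category device is an attractive way of securing all the required non-degeneracies at once.

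The upgrade from the countable family $\{Q_n\}$ to all of $\nest'$ has, however, two concrete holes. First, you adjoin only the atoms, i.e.\ the $Q$ with $Q_-<Q$; the symmetric half of the argument (excluding $Qv=v$ for $Q<I_{\mathcal K}$) needs, in the sub-case $Q_+>Q$ with $Q_+=I_{\mathcal K}$, the projection $Q$ itself to lie in the family, and such a $Q$ need not be an atom. You must also adjoin the left endpoints of gaps, $\{R_-\colon R\in\nest',\,R_-<R\}$, which is countable for the same orthogonality reason. Second, and more substantively, ``extract $Q_{n_k}$ with $0<Q_{n_k}\uparrow Q$'' does not follow from the monotone-subsequence fact: a sequence in $\{Q_n\}$ converging strongly to $Q$ could a priori admit only decreasing monotone subsequences, which approach $Q$ from above and yield no contradiction. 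The repair is to show directly that $\{Q_n\}$ meets the order interval $(0,Q]$: since $Q=Q_->0$ there is $R_0\in\nest'$ with $0<R_0<Q$; any $D\in\nest'$ with $\|(D-R_0)u\|<1/2$ and $\|(D-R_0)w\|<1/2$, where $u$ and $w$ are unit vectors in $R_0$ and in $Q\ominus R_0$ respectively, satisfies $Du\neq 0$ (so $D\neq 0$) and $Dw\neq w$ (so $D\not\geq Q$, whence $D<Q$ by total ordering). Strong density puts such a $D$ among the $Q_n$, and a single such $D$ already gives $Dv=DQv=0$, contradicting the Baire construction; no increasing sequence is needed. The case $Qv=v$ is handled symmetrically. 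With these two repairs your argument for (i) is complete.
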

\begin{proof} 

(i) We begin by showing that, given $P\in \N$, $P=P_x$, for some $x\in \H$. If $P=0$, then $x=0$ satisfies the requirement. Consider now a non-zero projection $P\in \N$. If   $P_-< P$, then, for any non-zero $x\in P\ominus P_-$,   $P_x=P$. 

 If, on the other hand,  $P=P_-= \vee\{Q\in \N\colon Q<P\}$, then, since $\H$ is separable, by \cite[Proposition 3 and Corollary 4]{BCW},  there exists a strictly  increasing sequence $(Q_n)$ in $\N$ such that $P=\vee Q_n$. Choose a sequence $(x_n)$ such that  $x_1\in Q_1$,
 \begin{equation}\label{noteq}
  x_n\in Q_n\ominus Q_{n-1}\quad \mbox{and} \quad  1=\|x_1\|=\|x_n\|.
 \end{equation} 
  Let $x\in \H$ be  the sum of the  series 
 $
 \sum_{n=1}^\infty \frac{1}{n^2} x_n
 $.
   By the definition of  $x$, we have  $P_x\leq P$. But $P_x$ cannot be smaller than $P$. If this were the case, then there would exist $p\in \mathbb{N}$   such that $P_x< Q_{p+1}\leq P$. Hence, for all $n>p+1$,  $x_n=0$ thus contradicting \eqref{noteq}.

By definition, for any $w\in \H$, 
$$
\hat{P}_w 
= \vee\{P\in\nest:Pw = 0\}
=(\wedge \{P^\perp \in\nest^\perp:Pw = 0\})^\perp
=(\wedge \{P \in\nest^\perp: P w = w\})^\perp.
$$
Hence $\hat{P}_w =(P_{w,{\scriptsize \N^\perp}})^\perp$, where 
 $P_{w,{\scriptsize \N^\perp}}$ is  as in \eqref{e33} when considering the nest $\N^\perp$. Since, by the above, there exists $x\in \H$ such that $P_{x,{\scriptsize \N^\perp}}= P^\perp$, it follows that $\hat{P}_x=P$. 

It remains to show that given $N,P\in \N$ with $N<P$, there exists $x\in \H$ such that $P=P_x$ and $N=\hat{P}_x$. By the above, there exist $z,w\in \H$ such that $P_z=P$ and $\hat{P}_w=N$.  Let $x_1=(P-N)z$ and  $x_2=(\hat{P}_{x_1}-N)w$. Observe that     
$0\leq \hat{P}_{x_1}-N$. Then $x=x_1 + x_2$ is such that $P=P_x$ and $N=\hat{P}_x$.

(ii)    
To prove  that $S$ is linearly independent, we must show that, for every finite subset 
$M=\{x_i\colon i=1,\dots, m\}$ of  $S$, we have that $\sum_{i=1}^m \alpha_i x_i=0$ only if  $\alpha_i = 0$, for all $i=1,\dots,m$.
 The assertion trivially holds when $S$ is a singleton. We shall prove the general result using mathematical induction.

Assume now that the hypothesis is valid for all subsets of $S$ having $n$ or less vectors. Let $M_{n+1}$ be any subset of $S$ containing $n+1$ distinct vectors. Index the vectors such that $P_{x_i} < P_{x_j}$, $i < j$. Hence, if $\sum_{i=1}^{n+1} \alpha_i x_i = 0$, then 
$$
\sum_{i=1}^{n+1} \alpha_i \ortho{P_{n}} x_i =  \alpha_{n+1} \ortho{P_{n}} x_{n+1} = 0,
$$
from which follows that $\alpha_{n+1} = 0$.
 The assertion now follows by application of the induction hypothesis.
\end{proof}

\section{Kernel maps and kernel sets}\label{s_kernelmap}

Let ${\mathcal P}(\mathcal{H})$ be the lattice of projections in $B(\H)$.  
 Following \cite{BO, ST}, we consider the partial order  $\preceq$  on  ${\mathcal P}(\mathcal{H})\times {\mathcal P}(\mathcal{H})$   defined, for all
$(P_1,Q_1),$ $(P_2,Q_2)$ $\in {\mathcal P}(\mathcal{H})\times {\mathcal P}(\mathcal{H})$, by
\begin{equation} \label{eq04}
(P_1,Q_1)\preceq (P_2,Q_2)\qquad \text{if, and only if,}\qquad P_1\leq P_2\; \text{ and }\; Q_2\leq Q_1.
\end{equation}
It follows that the  operations of join  and meet  are given, respectively, by
\begin{equation} \label{eq03}
\begin{split}
(P_1,Q_1)\vee (P_2,Q_2)&=(P_1\vee P_2,Q_1\wedge Q_2),\\
(P_1,Q_1)\wedge (P_2,Q_2)&=(P_1\wedge P_2,Q_1\vee Q_2).
\end{split}
\end{equation}
We write ${\mathcal P}(\mathcal{H})\times_{\preceq} {\mathcal P}(\mathcal{H})$ when referring to   the cartesian product together with the partial order relation $\preceq$.  Recall that a subset  of ${\mathcal P}(\mathcal{H})\times_{\preceq} {\mathcal P}(\mathcal{H})$ is said to be a \emph{bilattice} if 
it is closed under the lattice operations \eqref{eq03} and contains the pairs $(0,0)$, $(0,I)$, and $(I,0)$ (cf. \cite{BO, ST}). The top and bottom elements of any bilattice are $(I,0)$, and $(0,I)$, respectively. 
${\mathcal P}(\mathcal{H})\times_{\preceq} {\mathcal P}(\mathcal{H})$ is itself one amongst many examples of bilattices.  Here, however, we are mainly interested in  bilattices  associated with a single operator in a way to be made precise below (see \eqref{01}). For more details on bilattices, the reader is referred to \cite{BO, Erdos, LS, ST}.

Let $\N$ be a  subspace lattice  and let $T$ be an operator in $B(\H)$. Adopting the notation of  \cite{BO}, we define the set $\Bil(T, \N)$ by 
\begin{equation}\label{01}
\Bil(T, \N)=\{ (P,Q)\in \mathcal{L}\times_{\preceq} \N^\perp\colon QTP=0\}.
\end{equation}  
Observe that the set defined in \eqref{01} is a strongly closed bilattice which coincides with the intersection of  the strongly closed bilattices $\mathcal{L}\times_{\preceq} \N^\perp$  and 
 $
 \{ (P,Q)\in {\mathcal P}(\mathcal{H})\times_{\preceq} {\mathcal P}(\mathcal{H})\colon QTP=0\}
 $
(cf. \cite{BO, ST}).   For each $P\in \N$, define  the projections $\varphi_{T, {\tiny \N}}(P), \psi_{T, {\tiny \N}}(P)$ 
by
\begin{equation} \label{eq1}
\varphi_{T, {\tiny \N}}(P)=\vee\{P' \in \lt \colon ( P', P^\perp)\in \Bil(T, \N)\},
\end{equation}
\begin{equation}\label{eq2}
\psi_{T, {\tiny \N}}(P)=\vee \{P'^\perp \in \lt^\perp \colon \varphi_{T, {\tiny \N}}(P')=\varphi_{T, {\tiny \N}}(P)\}, 
\end{equation}

\begin{lem}\label{rem1}
Let $\N$ be a  subspace lattice  and let $T$ be an operator in $B(\H)$. Then the map $\varphi_{T, {\tiny \N}}$  is an order homomorphism on $\N$ and the map $\psi_{T, {\tiny \N}}$ is an anti-order homomorphism from $\N$ to $\N^\perp$.
\end{lem}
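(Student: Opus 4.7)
The plan is to dispatch $\varphi_{T,\N}$ directly from its definition, and for $\psi_{T,\N}$ to replace the defining level-set by a more tractable formula depending only on $\varphi_{T,\N}(P)$. For $\varphi_{T,\N}$: if $P_1\le P_2$ in $\N$, then $P_2^\perp\le P_1^\perp$, so any $P'\in\N$ with $P_1^\perp T P'=0$ automatically satisfies $P_2^\perp T P'=P_2^\perp P_1^\perp T P'=0$; the set whose join defines $\varphi_{T,\N}(P_1)$ is thus contained in the one defining $\varphi_{T,\N}(P_2)$, whence $\varphi_{T,\N}(P_1)\le\varphi_{T,\N}(P_2)$. The difficulty for $\psi_{T,\N}$ is that the level set $A(P):=\{P'\in\N : \varphi_{T,\N}(P')=\varphi_{T,\N}(P)\}$ does not behave monotonically in $P$: for $P_1\le P_2$, $A(P_1)$ and $A(P_2)$ need not be comparable as sets.

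The first step is to check that the join in \eqref{eq1} is attained, i.e.\ $P^\perp T\varphi_{T,\N}(P)=0$. If $\{P'_\alpha\}\subseteq\N$ satisfies $P^\perp T P'_\alpha=0$ for every $\alpha$, then the bounded operator $P^\perp T$ vanishes on each $P'_\alpha\H$, hence on the span of $\bigcup_\alpha P'_\alpha\H$, and by continuity on its closure $(\vee_\alpha P'_\alpha)\H$. Now define
$$
S(P):=\wedge\{P'\in\N : P'^\perp T\varphi_{T,\N}(P)=0\},
$$
which lies in $\N$ by closure under meets. The dual attainment argument (each $P'\H$ in the defining set contains $T(\varphi_{T,\N}(P)\H)$, hence so does $S(P)\H$) gives $S(P)^\perp T\varphi_{T,\N}(P)=0$, so $\varphi_{T,\N}(P)\le\varphi_{T,\N}(S(P))$; since $P$ itself lies in the defining set of $S(P)$, also $S(P)\le P$, and monotonicity of $\varphi_{T,\N}$ forces $\varphi_{T,\N}(S(P))=\varphi_{T,\N}(P)$. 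Therefore $S(P)^\perp$ participates in the join defining $\psi_{T,\N}(P)$, yielding $S(P)^\perp\le\psi_{T,\N}(P)$. Conversely, any $P'\in A(P)$ satisfies $P'^\perp T\varphi_{T,\N}(P)=P'^\perp T\varphi_{T,\N}(P')=0$ by attainment applied to $P'$, so $S(P)\le P'$; joining over $A(P)$ yields $\psi_{T,\N}(P)\le S(P)^\perp$. Hence $\psi_{T,\N}(P)=S(P)^\perp\in\N^\perp$.

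With this identity the anti-order property drops out: if $P_1\le P_2$ then $\varphi_{T,\N}(P_1)\le\varphi_{T,\N}(P_2)$, and any $P'\in\N$ satisfying $P'^\perp T\varphi_{T,\N}(P_2)=0$, i.e.\ $T(\varphi_{T,\N}(P_2)\H)\subseteq P'\H$, automatically satisfies $P'^\perp T\varphi_{T,\N}(P_1)=0$; hence the set defining $S(P_2)$ is contained in the set defining $S(P_1)$, giving $S(P_1)\le S(P_2)$ and $\psi_{T,\N}(P_2)=S(P_2)^\perp\le S(P_1)^\perp=\psi_{T,\N}(P_1)$. The main obstacle of the argument is pinpointing the alternative formula $\psi_{T,\N}(P)=S(P)^\perp$; once it is in hand both halves of the lemma reduce to the same elementary monotonicity of the sets $\{R\in\N : QTR=0\}$ in $Q$.
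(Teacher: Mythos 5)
Your argument is correct and follows essentially the same route as the paper: the crux in both cases is the identity $\psi_{T,\N}(P)=\vee\{Q^\perp\in\N^\perp\colon Q^\perp T\varphi_{T,\N}(P)=0\}$ (your $S(P)^\perp$ is exactly the paper's \eqref{extra2}), after which the anti-order property follows from the same containment of defining sets. The only difference is organizational: you derive that identity self-containedly from the attainment of the joins, whereas the paper routes it through \eqref{extra1} and a forward reference to Proposition \ref{l_bil}.
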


\begin{proof} Since $\lt$ is complete, for each $P$,  $\varphi_{T, {\tiny \N}}(P)\in \nest$ and $\psi_{T, {\tiny \N}}(P)\in \nest^\perp$.
We show firstly that the map defined in \eqref{eq1} is an order homomorphism on the lattice $\N$. In fact, if $P_1,P_2, P'$ are projections in $\N$ such that $P_1\leq P_2$ and $(P',P_1^\perp)\in \Bil(T, \N)$, then 
$$
 P_2^\perp TP'=  P_2^\perp P_1^\perp TP'=0.
$$
Hence $(P',P_2^\perp)\in \Bil(T, \N)$, from which follows that $\varphi_{T, {\tiny \N}}(P_1)\leq \varphi_{T, {\tiny \N}}(P_2)$.

In order to prove that $\psi_{T, {\tiny \N}}$ is an anti-order homomorphism from $\N$ to $\N^\perp$, we will prove that
\begin{equation}\label{extra1}
  \varphi_T(\psi_{T}(P)^\perp)=\varphi_T(P),
  \end{equation}
  \begin{equation}\label{extra2}
  \psi_{T}(P) =\vee\{Q^\perp\in \N^\perp\colon Q^\perp T\varphi_{T}(P)=0\}.
\end{equation}
Observe that, by  \eqref{eq2}, $P^\perp \leq \xinn(P)$. Hence, since the map $\varphi_T$  preserves order,
$\yinn(\xinn(P)^\perp) \leq \yinn (P)$.  But, by Proposition \ref{l_bil}, 
$\xinn(P) T\yinn(P)  = 0$, which shows that $\yinn(\xinn(P)^\perp) \nless \yinn (P)$. Hence 
$$
\varphi_T(\psi_{T}(P)^\perp)=\varphi_T(P),
$$
proving \eqref{extra1}.

Since $\lt^\perp$ is complete, for each $P$,   we have that $\psi_{T}(P)\in \nest^\perp$. Define the set $\S$ by
$$
\S=\{Q^\perp \in \N^\perp\colon Q^\perp T\varphi_{T}(P)=0\}.
$$
By Proposition \ref{l_bil},  $(\varphi_{T} (P), \psi_{T} (P))\in \Bil(T, \N)$ from which follows that $\psi_{T}(P)\in\S$. Consequently, 
$
\psi_{T}(P)\leq \vee \S.
$
 Moreover, since $\N$ is strongly closed, 
 $$
 (\vee \S) T\varphi_{T} (P)=0,
 $$
 which shows that $\varphi_T(P)\leq \varphi_T((\vee \S)^\perp)$.
 
 By \eqref{eq3}, we also have 
 $$
 (\vee \S) T\varphi_{T} ((\vee \S)^\perp)=0.
 $$
 Hence, since 
 $
\psi_{T}(P)\leq \vee \S
$, 
$$
 \psi_{T}(P)T\varphi_{T} ((\vee \S)^\perp)=\psi_{T}(P)(\vee \S) T\varphi_{T} ((\vee \S)^\perp)=0.
 $$
 Taking into account  definition \eqref{eq1}, it follows from the equality above  and \eqref{extra1}  that 
 $$ 
 \varphi_{T} ((\vee \S)^\perp)\leq \varphi_T(\psi_{T}(P)^\perp)= \varphi_T((P)).
 $$
 Consequently, $\varphi_{T} ((\vee \S)^\perp)=\varphi_T(P)$ which shows that  
 $\vee \S$ lies in the defining set of \eqref{eq2}. 
 
 Since, by definition \eqref{eq2}, $\vee\S\leq  \psi_{T}(P)$, it follows that
 $$
\psi_{T} (P)=\vee\S=\vee\{Q^\perp\in \N^\perp\colon Q^\perp T\varphi_{T}(P)=0\},
$$
as required.

Now let $P_1, P_2\in \N$ be such that $P_1\leq P_2$. Since, for $Q$ such that $Q^\perp T\varphi_{T}(P_2)=0$, 
$$
Q^\perp T\varphi_{T}(P_1)=Q^\perp T\varphi_{T}(P_1)\varphi_{T}(P_2)=Q^\perp T\varphi_{T}(P_2)\varphi_{T}(P_1)=0,
$$
 we have 
$$
\{Q^\perp\in \N^\perp\colon Q^\perp T\varphi_{T}(P_2)=0\}\subseteq \{Q^\perp\in \N^\perp\colon Q^\perp T\varphi_{T}(P_1)=0\}.
$$
Hence, by \eqref{extra2}, 
$
\psi_{T} (P_2)\leq \psi_{T} (P_1),
$
which shows that the map $\psi_{T}\colon \N\to \N^\perp$ is an anti-order homomorphism.
\end{proof}

In what follows,  we  mostly omit the subscript ${\cdot}_{\tiny \N}$ to simplify the notation. 

\begin{prop}\label{l_bil} Let $T$ be an operator in $B(\H)$, let  $\N$ be a  subspace lattice and let  $P$ be a projection in $\N$. Then 
$$(\yinn(P), \xinn(P))\in \Bil(T, \N),$$  
\begin{equation}\label{e2}
(\yinn(P), P^\perp)=\vee\{(P',Q)\in \Bil(T, \N)\colon P^\perp\leq Q\},
\end{equation}
and
\begin{equation} \label{e1}
(\yinn(P),\xinn(P))=\wedge\{(\yinn(P'), P'^\perp)\in \Bil(T, \N)\colon \yinn(P')=\yinn(P)\}.
\end{equation}
\end{prop}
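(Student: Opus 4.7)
The plan is to prove the three assertions in order, using throughout the fact (noted immediately after the definition of $\Bil(T,\N)$) that $\Bil(T,\N)$ is a strongly closed bilattice, so that strong limits and the bilattice lattice operations \eqref{eq03} preserve membership.

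For the first assertion, I would first establish the auxiliary fact $(\varphi_T(P), P^\perp) \in \Bil(T, \N)$. By \eqref{eq1}, $\varphi_T(P)$ is the join in $\N$ of the projections $P'$ with $(P', P^\perp) \in \Bil(T, \N)$; since $\Bil(T, \N)$ is strongly closed, this join satisfies $P^\perp T \varphi_T(P) = 0$, giving the claim. Next, by \eqref{eq2}, $\psi_T(P)$ is the join of those $P'^\perp \in \N^\perp$ with $\varphi_T(P') = \varphi_T(P)$; applying the auxiliary fact with $P$ replaced by $P'$ yields $P'^\perp T \varphi_T(P') = P'^\perp T \varphi_T(P) = 0$, and taking the join over $P'^\perp$ together with strong closure produces $\psi_T(P) T \varphi_T(P) = 0$, i.e., $(\varphi_T(P), \psi_T(P)) \in \Bil(T, \N)$.

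For assertion \eqref{e2}, I would show that $(\varphi_T(P), P^\perp)$ is actually the maximum of the set on the right-hand side. By the auxiliary fact it lies in the set. For any other $(P', Q) \in \Bil(T, \N)$ with $P^\perp \leq Q$, multiplying $QTP' = 0$ on the left by $P^\perp$ and using $P^\perp = P^\perp Q$ gives $P^\perp T P' = 0$, hence $P' \leq \varphi_T(P)$ by \eqref{eq1}. Together with $P^\perp \leq Q$ this is exactly $(P', Q) \preceq (\varphi_T(P), P^\perp)$, so the supremum equals this maximum.

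For assertion \eqref{e1}, I would compute the meet directly from the bilattice formula in \eqref{eq03}: for any family $(P'_\alpha, P'^\perp_\alpha)$ indexed by those $P'_\alpha$ with $\varphi_T(P'_\alpha) = \varphi_T(P)$, the meet is $\bigl(\bigwedge_\alpha \varphi_T(P'_\alpha),\, \bigvee_\alpha P'^\perp_\alpha\bigr)$. The first coordinate is constant equal to $\varphi_T(P)$, and the second coordinate is $\psi_T(P)$ by definition \eqref{eq2}. Hence the meet equals $(\varphi_T(P), \psi_T(P))$, and it belongs to $\Bil(T, \N)$ by the first assertion. There is no real obstacle here; the only delicate point is to be consistent about which lattice each join/meet is taken in, and to apply strong closure of $\Bil(T,\N)$ at the two moments required in assertion~(1).
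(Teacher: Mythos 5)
Your proposal is correct and follows essentially the same route as the paper: establish $Q^\perp T\varphi_T(Q)=0$ as an auxiliary fact via strong closure, deduce $\psi_T(P)T\varphi_T(P)=0$ by taking the join in \eqref{eq2}, identify $(\varphi_T(P),P^\perp)$ as the maximum of the set in \eqref{e2}, and read off \eqref{e1} directly from the meet formula \eqref{eq03} and definition \eqref{eq2}. The only (harmless) difference is that for \eqref{e2} you verify $P'\leq\varphi_T(P)$ by the direct computation $P^\perp TP'=P^\perp QTP'=0$ rather than by invoking the monotonicity of $\varphi_T$ from Lemma \ref{rem1}, which makes your argument slightly more self-contained.
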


\begin{proof} Since $\lt$ is complete, $(\yinn(P),\xinn(P))\in \N\times\N^\perp$. To show that $(\yinn(P), \xinn(P))\in \Bil(T, \N)$, it suffices to prove that $\xinn(P) T\yinn(P)=0$.
Since $\N$ is strongly closed, we have, for all $Q\in \N$,
\begin{equation}\label{eq3}
Q^\perp T\yinn(Q)=0.
\end{equation}
 Hence 
$P'^\perp T\yinn(P)=0$, for all $P'\in \N$ for which $\yinn(P')=\yinn(P)$.  It now follows from \eqref{eq2}, that 
$\xinn(P) T\yinn(P)=0$, since $\N^\perp$ is also strongly closed.

Let $\S:=\{(P',Q)\in \Bil(T, \N)\colon P^\perp\leq Q\}$. Observe that, by \eqref{eq3}, $(\yinn(P), P^\perp)\in \S$. Hence, 
to show that \eqref{e2} holds, it suffices to prove that $(P',Q)\preceq (\yinn(P), P^\perp)$, for all $(P',Q)\in \S$.
 Suppose then that  $(P', Q)\in \S$. It follows, by Lemma \ref{rem1} and \eqref{eq1}, that $P'\leq \yinn (Q^\perp)\leq \yinn(P)$,  since $Q^\perp \leq P$. Hence $(P',Q)\preceq (\yinn(P), P^\perp)$, as required.

Let  $\X:=\{(\yinn(P'), P'^\perp)\in \Bil(T, \N)\colon \yinn(P')=\yinn(P)\}$. Hence
$$ \wedge \X=  \bigl(\yinn(P), \vee\{P'^\perp\in \N^\perp \colon \yinn(P')=\yinn(P)\}\bigr).
$$
It now follows from definition \eqref{eq2} that
$
\wedge \X=(\yinn(P), \xinn(P))$.
\end{proof}

Define the  \emph{kernel map $\omega_{T, {\tiny \N}}\colon \N\to  \Bil(T,\N)$ of  the operator $T$ relative to $\N$} by 
\begin{equation}\label{ed1}
\omega_{T, {\tiny \N}}(P)=(\varphi_{T, {\tiny \N}}(P), \psi_{T, {\tiny \N}}(P)).
\end{equation}

The \emph{kernel set $\Omega_{T, {\tiny \N}}$ of  the operator $T$ relative to $\N$} is the subset of $\Bil(T,\N)$ consisting of the image  of $\N$ under the map $\omega_{T, {\tiny \N}}$, that is,
 \begin{equation}\label{e001}
\Omega_{T, {\tiny \N}}=\{\omega_{T, {\tiny \N}}(P) \colon P\in \N\}.
\end{equation}
The set $\Omega_{T, {\tiny \N}}$ together with the restriction of the ordering of $\Bil(T,\N)$ is itself a partially ordered set.

 In the sequel, we write $\omega_{T}$ and $\Omega_{T}$, since it will be clear which subspace lattice we shall be considering. For simplicity, it will  be frequent to refer to $\omega_{T}$ and $\Omega_{T}$ as, respectively,  the kernel map and the kernel set. It is  also worth noticing that 
 \begin{equation}\label{eq_extreme_elements}
     \xinn (0)=I=\yinn(I).
 \end{equation}

\begin{lem}\label{reach_charact}
Let $\nest$ be a  subspace lattice,  let $T$ be an operator in $B(\H)$
 and let $\omega_{T}\colon \N\to  \Bil(T,\N)$ be the kernel map of $T$  relative to $\N$. The following assertions hold.
 
\begin{enumerate}[label=(\roman*)]
\item The kernel map $\omega_{T}$ is an order homomorphism such that, for all $P\in \N$, 
 $
\omega_T(\psi_T(P)^\perp)=\omega_T(P).
$
 \item For projections $P_1, P_2\in \nest$ with $P_1< P_2$, if $\omega_T(P_1)\neq \omega_T(P_2)$, then $\yinn(P_1)<\yinn(P_2)$ and $\xinn(P_2)<\xinn(P_1)$.
\item If $\nest$ is a nest, then, for all $P\in \N$, there exists $x\in \H$ such that  $\omega_T(P)=(P_x, P_{Tx}^\perp)$.
 \end{enumerate}
\end{lem}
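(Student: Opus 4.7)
Part (i) will fall out of Lemma \ref{rem1}: since $\varphi_{T}$ is an order homomorphism on $\N$ while $\psi_{T}$ is an anti-order homomorphism into $\N^\perp$, the pair $\omega_{T}(P)=(\varphi_{T}(P),\psi_{T}(P))$ automatically respects the order $\preceq$ from \eqref{eq04}, which inverts the second coordinate. For the identity $\omega_{T}(\psi_{T}(P)^{\perp})=\omega_{T}(P)$, the first coordinate is exactly \eqref{extra1}; the second coordinate follows because the defining family on the right-hand side of \eqref{eq2} depends only on the value of $\varphi_{T}$, so equality of first coordinates forces equality of second coordinates.

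Part (ii) is then handled by contraposition, using the same two observations. If $\varphi_{T}(P_1)=\varphi_{T}(P_2)$, the dependence of \eqref{eq2} only on $\varphi_{T}$ forces $\psi_{T}(P_1)=\psi_{T}(P_2)$ and hence $\omega_{T}(P_1)=\omega_{T}(P_2)$. Symmetrically, if $\psi_{T}(P_1)=\psi_{T}(P_2)$, then \eqref{extra1} gives $\varphi_{T}(P_1)=\varphi_{T}(\psi_{T}(P_1)^{\perp})=\varphi_{T}(\psi_{T}(P_2)^{\perp})=\varphi_{T}(P_2)$ and again the two images coincide. Combined with the monotonicity from (i), this forces both $\varphi_{T}(P_1)<\varphi_{T}(P_2)$ and $\psi_{T}(P_2)<\psi_{T}(P_1)$ whenever $\omega_{T}(P_1)\neq\omega_{T}(P_2)$.

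Part (iii) is the main point. Set $V=\varphi_{T}(P)$ and $W=\psi_{T}(P)^{\perp}$; by \eqref{extra2}, $W$ is the smallest projection in $\N$ whose range contains $TV(\H)$. When $V=0$, one has $W=0$ as well and $x=0$ works. For $V\neq 0$, I plan to apply the Baire category theorem inside the complete metric space $V(\H)$: by separability of $\H$, arguing as in the proof of Proposition \ref{p_prelim}(i), extract countable families $\{V_n\}\subseteq\N$ cofinal in $\{Q\in\N:Q<V\}$ and $\{Q_m\}\subseteq\N$ cofinal in $\{Q\in\N:Q<W\}$. Each $V_n(\H)$ is a proper closed subspace of $V(\H)$, and each $\ker\bigl((I-Q_m)T|_{V(\H)}\bigr)$ is also proper, precisely because the minimality of $W$ rules out $TV(\H)\subseteq Q_m(\H)$. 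Baire then supplies an $x\in V(\H)$ outside all of these proper closed subspaces, and the total ordering of $\N$ converts the cofinality properties into $P_x=V$ and $P_{Tx}=W$. The main obstacle is precisely this simultaneous double control: a naive pick of $x\in V\ominus V_{-}$ need not yield $P_{Tx}=W$, since $T$ can collapse all such vectors into $W_{-}$, and Baire is tailored to dodge countably many such pathologies on both the domain and range sides at once.
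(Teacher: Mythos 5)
Your parts (i) and (ii) are correct and essentially identical to the paper's argument: both reduce to Lemma \ref{rem1}, to \eqref{extra1}, and to the observation that the defining set in \eqref{eq2} depends only on $\varphi_T(P)$, so that equality in either coordinate propagates to the other. Part (iii) is where you genuinely diverge, and your route is valid. The paper proceeds constructively and in two separate stages: it first produces $x$ with $P_x=\varphi_T(P)$ via Proposition \ref{p_prelim}(i), then splits into cases according to whether $\psi_T(P)^\perp$ has an immediate predecessor in $\N$, building in the continuous case an explicit vector $y=\sum_{k\in\Gamma}\frac{1}{k^2}x_k$ with $P_{Ty}=\psi_T(P)^\perp$, and finally passes to $z=x+y$ (which works because $P_y<\varphi_T(P)$ and $P_{Tx}<P_{Ty}$ prevent cancellation). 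Your Baire category argument replaces all of this with a single application of the fact that the Banach space $V(\H)$ is not a countable union of proper closed subspaces: the subspaces $V_n(\H)$ and $\ker\bigl((I-Q_m)T|_{V(\H)}\bigr)$ are proper and closed for exactly the reasons you state (strict inequality $V_n<V$, and minimality of $W$ via \eqref{extra2}), the cofinal countable families exist by the same appeal to \cite{BCW} and separability that the paper itself makes, and total ordering of the nest turns cofinality into $P_x=V$ and $P_{Tx}=W$. What your approach buys is uniformity --- no case split on $\psi_T(P)_+$, no explicit series, and simultaneous control of the domain and range conditions --- at the cost of being nonconstructive; the paper's version exhibits the vector explicitly, which is in keeping with the series constructions it reuses in Lemma \ref{l_px2}. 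One cosmetic remark: you should note explicitly that the case $V\neq 0$, $W=0$ is also trivial (then $TV=0$ and $P_{Tx}\leq W$ forces $P_{Tx}=0$ with no subspaces to avoid on the range side), but this does not affect correctness.
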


\begin{proof}  (i) Let $P_1, P_2\in \N$ be such that $P_1\leq P_2$. Since, by Lemma \ref{rem1}, $\yinn(P_1)\leq \yinn (P_2)$ and $\xinn(P_2)\leq \xinn (P_1)$, we have immediately that $\omega_T(P_1)\preceq \omega_T(P_2)$, that is, $\omega_{T}$ is an order homomorphism.

Observe that, by  \eqref{extra1}, $\yinn(\xinn(P)^\perp) = \yinn (P)$. It now follows from  \eqref{eq2} that 
 $
\psi_T(\psi_T(P)^\perp)=\psi_T(P).
 $
Hence 
\begin{equation}\label{eprim}
\omega_T(\psi_T(P)^\perp)=\omega_T(P).
 \end{equation}

 (ii) Let $P_1, P_2\in \N$ be projections such that $P_1<P_2$ and $\omega_{T}(P_1)\neq \omega_{T}(P_2)$.  We shall show that 
 $\yinn(P_1)< \yinn(P_2)$ and $ \xinn(P_2)< \xinn(P_1)$.
 
 Assume, firstly, that $\yinn(P_1)\neq\yinn(P_2)$. Hence, the only possibility is $\yinn(P_1)< \yinn(P_2)$. If $\xinn(P_1)=\xinn(P_2)$, then, by (i) of this lemma,
 $$
 \yinn(P_1)=\yinn(\xinn(P_1)^\perp)=\yinn(\xinn(P_2)^\perp)= \yinn(P_2),
 $$
 which contraditcs the assumption. Hence  $\xinn(P_2)<\xinn(P_1)$.   
 
 If  we start by assuming that $\xinn(P_1)\neq\xinn(P_2)$, then it follows immediately from \eqref{eq2} that $\yinn(P_1)<\yinn(P_2)$.

(iii) Let $P$ be a projection in $\N$. By Proposition \ref{p_prelim}~(i), there exists $x \in \hilbsp$  such that $P_x = \yinn(P)$.   By Proposition \ref{l_bil}, for all $z\in \H$ with $P_z\leq \yinn(P)$, 
  $$\xinn(P)T z=\xinn(P)TP_z z=\xinn(P)T\yinn(P)z=0
  $$
 and, consequently,   $\xinn(P) \leq P^\perp_{Tx}$. If $\xinn(P) = P^\perp_{Tx}$, then the result is proved. 
 
 Suppose that, on the other hand,  $\xinn(P) < P^\perp_{Tx}$. We have that either $\psi_T(P)<\psi_T(P)_+$ or $\psi_T(P)=\psi_T(P)_+$.\\
 Case 1. $\psi_T(P)<\psi_T(P)_+$ or, equivalently, $(\psi_T(P)^\perp)_-<\psi_T(P)^\perp$.\\
 By   \eqref{eq2}, there exists a non-zero $y\in \varphi_T(P)$ such that 
 $$
 \bigl(\psi_T(P)^\perp -(\psi_T(P)^\perp)_-\bigr)Ty\neq 0,
 $$
from which follows that $P_{Ty}=\psi_T(P)^\perp$. 
If $P_y=\varphi_T(P)$, then the proof is finished. If $P_y<\varphi_T(P)$, then   set $z=x+y$. Hence  $P_{z}=\varphi_T(P)$ and 
 $P_{Tz}=\psi_T(P)^\perp$. \\
 Case 2. $\psi_T(P)=\psi_T(P)_+$ or, equivalently, $(\psi_T(P)^\perp)_-=\psi_T(P)^\perp$.\\
Let $(Q_n)$ be a strictly increasing sequence in $\N$ such that  $\xinn(P)^\perp=\vee\{Q_n\colon n\in \mathbb{N}\}$ (cf.   \cite[Proposition 3 and Corollary 4]{BCW}). 
 
Let 
 $$
 \Gamma=\bigl\{k\in \nats\colon (Q_{k+1} - Q_{k})T\yinn(P)\neq \{0\}\bigr\}.
 $$
 Since $\xinn(P) < P^\perp_{Tx}$, it follows from \eqref{eq2} that the set $\Gamma$ is infinite. For each $k\in \Gamma$, choose  
 $x_k\in \yinn(P)$ such that 
$$
(Q_{k+1} - Q_{k})Tx_{k}\neq 0,
$$
  $\|x_{k}\|=1$, and 
 let  $y=\sum_{k\in \Gamma}\frac{1}{k^2} x_k$. 
   
  A reasoning similar to that in the proof of Proposition \ref{p_prelim} (i) yields $\xinn(P)=P_{Ty}^\perp$. If $P_y=\yinn(P)$, then $(\yinn(P), \xinn(P))=(P_y, P_{Ty}^\perp)$, as required.
  
  If $P_y<\yinn(P)$, then take $z=x+y$. Obviously, $P_z=P_x=\yinn(P)$ and $P_{Tz}=P_{Ty}=\xinn(P)^\perp$, concluding the proof. 
  \end{proof}

\begin{rem}\label{rem8}
 Observe  that, if $\nest$ is totally ordered, that is, if $\nest$ is a nest, then $\Omega_T$ must be totally ordered, since, as seen in (i) of the above lemma, the map $\omega_T$ is an order homomorphism. 
   \end{rem}
   
\begin{thm}
\label{finite_contour}
Let $T$ be a  rank-$n$ operator in $\boundop$ and let $\nest$ be a nest. Then, the kernel set $\Omega_T$ of the operator $T$ relative to $\N$ has cardinality less than or equal to $n+1$. 
\end{thm}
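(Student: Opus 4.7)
The plan is to bound the cardinality of $\Omega_T$ by exhibiting, from any chain of distinct elements in $\Omega_T$, a linearly independent family inside the range of $T$. The linear independence will come from Proposition \ref{p_prelim}~(ii), and the concrete vectors realizing the elements of $\Omega_T$ will come from Lemma \ref{reach_charact}~(iii).

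First, I would observe that by Remark \ref{rem8}, $\Omega_T$ is totally ordered, so I can enumerate any $k$ distinct elements as $\omega_T(P_1) \prec \omega_T(P_2) \prec \cdots \prec \omega_T(P_k)$, with $P_1 < P_2 < \cdots < P_k$ in $\N$. Then by Lemma \ref{reach_charact}~(ii), the projections $\yinn(P_i)$ are strictly increasing in $\N$ and the projections $\xinn(P_i)$ are strictly decreasing in $\N^\perp$; equivalently, the projections $\xinn(P_i)^\perp$ are strictly increasing in $\N$.

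Next, by Lemma \ref{reach_charact}~(iii), for each $i$ I pick $x_i \in \H$ with $P_{x_i} = \yinn(P_i)$ and $P_{Tx_i} = \xinn(P_i)^\perp$. The key point is that the projections $P_{Tx_1}, \dots, P_{Tx_k}$ are pairwise distinct; since $P_{Tx_i} = 0$ if and only if $Tx_i = 0$, at most one of the $Tx_i$'s is zero. Discarding such a possible zero, the remaining at least $k-1$ vectors are nonzero elements of the range of $T$ with pairwise distinct projections $P_{Tx_i}$, so by Proposition \ref{p_prelim}~(ii) they form a linearly independent subset of $\image T$.

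Therefore $k - 1 \leq \operatorname{rank} T = n$, giving $|\Omega_T| \leq n+1$. I do not expect any serious obstacle in this argument; the main conceptual step is recognizing that the strict decrease of $\xinn(P_i)$ together with the representation from Lemma \ref{reach_charact}~(iii) transfers the distinctness of kernel-set elements into distinct $P_{Tx_i}$, after which Proposition \ref{p_prelim}~(ii) delivers the linear independence almost for free.
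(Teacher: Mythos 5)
Your proposal is correct and follows essentially the same route as the paper: realize distinct elements of the kernel set via vectors $x_i$ with pairwise distinct $P_{Tx_i}$ (Lemma \ref{reach_charact}~(ii),(iii)), discard the at most one zero image, and invoke Proposition \ref{p_prelim}~(ii) to get at least $k-1$ linearly independent vectors in $\image T$. The only cosmetic difference is that the paper argues by contradiction from $|\Omega_T|>n+1$ while you bound $k-1\leq n$ directly (and your version even absorbs the $n=0$ case without treating it separately).
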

\begin{proof} If $n=0$, then $T=0$, $\Omega_0=\{(I,I)\}$ and the assertion holds. 

Let now $n$ be positive. 
Firstly notice that, by Lemma \ref{reach_charact}~(i), for all $P_1,P_2\in \N$ with $\omega_T(P_1)\neq \omega_T(P_2)$, we have 
$\yinn(P_1) \neq \yinn(P_2)$ and $\xinn(P_1) \neq \xinn(P_2)$. Recall also that, by Lemma  \ref{reach_charact}~(iii), for each $P\in \N$, there exists $x \in \hilbsp$ such that $P_x = \yinn(P)$ and $P_{Tx}^\perp = \xinn(P)$.

Suppose now that the set $\llinner{T}$ has cardinality $m > n+1$.  By the above considerations, it is possible to find a subset 
$S=\{Tx_i\colon i=1, \dots, m\}$ of  $\image T$ having cardinality $m$ and such that $P_{Tx_i} \neq P_{Tx_j}$, for $i,j=1,\dots n$ with $i\neq j$. Hence there exist, at least,  $m-1>n$ non-zero elements in $S$. By 
 Proposition \ref{p_prelim} (i), it follows  that these $m-1$ elements are linearly independent, which contradicts the hypothesis of $T$ having rank equal to $n$.
\end{proof}
 
\begin{rem} Theorem \ref{finite_contour} cannot be improved, as will be clear from the example below. Let $\nest$ consist of the projections $0<P_1<P_2<P_3<P_4=I$, where $P_i, 1\leq i\leq 4$, corresponds to the span of the vectors $e_1,\dots, e_i$ of the canonical basis of $\mathbb{C}^4$. 
Consider the operators 
$$
T_1=\begin{bmatrix}
0&1&0&1\\
0&0&1&1\\
0&0&1&1\\
0&0&1&1
\end{bmatrix},
\qquad\qquad
T_2=\begin{bmatrix}
0&1&0&1\\
0&0&1&1\\
0&0&1&2\\
0&0&1&1
\end{bmatrix}
$$
on $\mathbb{C}^4$ whose ranks are, respectively, 2 and 3. However, both have the same  kernel set
$$
\Omega_{T_1}=\{(P_1,I),(P_2,P_1^\perp),(I,0)\}=\Omega_{T_2}.
$$
\end{rem}
\qquad \qquad \\
For each  $P$ in a nest $\N$, define the projection  $\sigma_T(P)$ by
\begin{equation}\label{eqq}
 \sigma_T(P)=\vee\{P'\in \N\colon \yinn(P')=\yinn(P)\}.
 \end{equation}
 It follows  from Lemma \ref{rem1}  that the map $P\mapsto \sigma_T(P)$ is an order homomorphism on $\nest$. Consequently, by Lemma \ref{reach_charact}~(i), for all $P$ in $\nest$,  $\omega_T(P) \preceq \omega_T(\sigma(P))$.

\begin{thm}\label{sw_tight_n_decomp}
Let $T$ be a rank-$n$ operator in $\boundop$ and let $\nest$ be a nest. Then, there exist   operators $\tens{x_r}{y_r} $,  $r=1,...,m$, such that $T=\sum_{r=1}^{m}\tens{x_r}{y_r}$ 
 and $\omega_T(P)\in \Bil(\tens{x_r}{y_r}, \N)$, for  all $P\in \N$. 
Moreover, if the nest is continuous then, for all  $r =1,...,m$, there exists $(\Phi_r, \Psi_r)\in \Omega_T$ such that  $\tens{x_r}{y_r} = \sigma(\Psi_r^\perp)(\tens{x_r}{y_r} )\ortho{\Phi_r}$.
\end{thm}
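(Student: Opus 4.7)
The plan is to decompose $T$ by slicing the domain along the chain $\Omega_T$. By Theorem \ref{finite_contour} and Remark \ref{rem8}, $\Omega_T$ is finite and totally ordered, so I enumerate it as
\[
(\Phi_0, \Psi_0) \prec (\Phi_1, \Psi_1) \prec \cdots \prec (\Phi_\ell, \Psi_\ell);
\]
note $\Phi_\ell = I$ and $\Psi_0 = I$ by \eqref{eq_extreme_elements}. I then set $E_0 = \Phi_0$ and $E_j = \Phi_j - \Phi_{j-1}$ for $j \geq 1$, obtaining mutually orthogonal projections summing to $I$. Since $\Phi_0 = \varphi_T(0)$ is the supremum of $\{P \in \N \colon TP = 0\}$ and $\N$ is strongly closed, $T\Phi_0 = 0$, hence $TE_0 = 0$; while for each $j \geq 1$, Proposition \ref{l_bil} applied to the pairs $(\Phi_j, \Psi_j)$ and $(\Phi_{j-1}, \Psi_{j-1})$ (combined with $\Psi_{j-1} \geq \Psi_j$) yields that the range of $TE_j = T\Phi_j - T\Phi_{j-1}$ lies in $\Psi_j^\perp \H$.

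From $T = \sum_{j=1}^\ell TE_j$, I would then write each $TE_j$ as a finite sum $\sum_{r=1}^{n_j} x_{j,r} \otimes y_{j,r}$, choosing $\{x_{j,r}\}_r$ as a basis of the range of $TE_j$ (so $x_{j,r} \in \Psi_j^\perp \H$) and taking $y_{j,r} \in E_j \H$, which is forced because $TE_j$ vanishes on $E_j^\perp \H$ and the $x_{j,r}$ are linearly independent. Reindexing produces $T = \sum_{r=1}^m x_r \otimes y_r$. Since $\omega_T(\N) = \Omega_T$, verifying $\omega_T(P) \in \Bil(x_r \otimes y_r, \N)$ for every $P \in \N$ reduces to checking $\Psi_i x_{j,r} = 0$ or $\Phi_i y_{j,r} = 0$ for each $(\Phi_i, \Psi_i) \in \Omega_T$. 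For $i < j$, the second equality holds because $\Phi_i \leq \Phi_{j-1}$ annihilates $y_{j,r} \in E_j \H$; for $i \geq j$, the first holds because $\Psi_i \leq \Psi_j$ and $x_{j,r} \in \Psi_j^\perp \H$.

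For the moreover statement, assume $\N$ is continuous; the key reduction is to establish $\sigma(\Psi_{j-1}^\perp) = \Psi_j^\perp$ for each $j \geq 1$. By \eqref{eq2}, $\Psi_j^\perp = \wedge\{Q \in \N \colon \varphi_T(Q) = \Phi_j\}$, so any $Q$ with $\varphi_T(Q) = \Phi_{j-1}$ satisfies $Q < \Psi_j^\perp$, giving $\sigma(\Psi_{j-1}^\perp) \leq \Psi_j^\perp$. For the reverse, order-preservation of $\varphi_T$ together with the finiteness of $\Omega_T$ forces $\varphi_T(Q) = \Phi_{j-1}$ for every $Q \in [\Psi_{j-1}^\perp, \Psi_j^\perp)$, so $\sigma(\Psi_{j-1}^\perp) \geq \vee[\Psi_{j-1}^\perp, \Psi_j^\perp)$; continuity of $\N$ then forces $(\Psi_j^\perp)_- = \Psi_j^\perp$, whence $\vee[\Psi_{j-1}^\perp, \Psi_j^\perp) = \Psi_j^\perp$. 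Taking $(\Phi_r, \Psi_r) := (\Phi_{j-1}, \Psi_{j-1}) \in \Omega_T$ for each rank-one piece $x_{j,r} \otimes y_{j,r}$, the containments $y_{j,r} \in E_j \H \subseteq \Phi_{j-1}^\perp \H$ and $x_{j,r} \in \Psi_j^\perp \H = \sigma(\Psi_{j-1}^\perp) \H$ translate directly into the required factorization $x_{j,r} \otimes y_{j,r} = \sigma(\Psi_r^\perp)(x_{j,r} \otimes y_{j,r})\Phi_r^\perp$.

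The hard part will be the identity $\sigma(\Psi_{j-1}^\perp) = \Psi_j^\perp$ in the continuous case: without continuity, $\Psi_j^\perp$ can be an atom over its predecessor in $\N$, so $\sigma(\Psi_{j-1}^\perp)$ can be strictly below $\Psi_j^\perp$ and the $x_{j,r}$ produced by the first part need not lie in $\sigma(\Psi_{j-1}^\perp)\H$. Continuity of $\N$ is exactly what closes this gap and enables the moreover statement.
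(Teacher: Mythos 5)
Your proposal is correct and takes essentially the same route as the paper: both slice $T$ as $\sum_j \Psi_j^\perp T(\Phi_j-\Phi_{j-1})$ along the finite totally ordered kernel set, verify membership of each $\omega_T(P)$ in the bilattice of each rank-one piece by the same two-case comparison of indices, and reduce the continuous case to the identity $\sigma(\Psi_{j-1}^\perp)=\Psi_j^\perp$ (the paper's Remark \ref{remex}). The only differences are cosmetic: you choose a basis of the range of each slice instead of compressing an arbitrary rank-one decomposition of $T$, and you prove the key identity via $(\Psi_j^\perp)_-=\Psi_j^\perp$ rather than by the paper's intermediate-projection contradiction.
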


\begin{proof} If   $n=0$,  the assertion is trivially true. Suppose then that $n$ is a positive integer and let $T=\sum_{i=1}^{n}\tens{z_i}{w_i}$. 

In this case, there exists $P\in \N$ such that $\xinn(P)<I$. In fact, if $\psi(P)=I$ for all $P\in \N$, then $\psi_T(I)=I$. Hence, by Proposition \ref{l_bil},
$$
0=\psi_T(I)T\varphi_T(I)=ITI=T,
$$
which contradicts the assumption that $T\neq0$.

 Let $k$ be the number of elements in the proper subset $\{\omega_T(P)\colon \xinn(P) \neq I\}$  of $\Omega_T$. Using \thmref{finite_contour} and \eqref{eq_extreme_elements}, there exists an integer $k$, with $1\leq k<n+1$, such that
 $$\{\omega_T(P)\colon \xinn(P) \neq I\}=\{(\Phi_j,\Psi_j)\colon j=1,\dots, k\}.
 $$
 
 If $k=1$, then 
 $$\{\omega_T(P)\colon \xinn(P) \neq I\}=\{(\Phi_1,\Psi_1)\}=\{\omega_T(P_1)\},
 $$
  for some non-zero projection $P_1\in \N$. If $k\neq 1$, let   $P_1<\dots<P_k$ be the projections such that  $\omega_T(P_1)\preceq\dots\preceq\omega_T(P_k)$ are the strictly ordered elements of  
 $\{\omega_T(P)\colon \xinn(P) \neq I\}$ 
 (cf. Lemma \ref{reach_charact} (ii)).
  
 For $j=1,\dots, k$, let  $\omega_T(P_j)=(\Phi_j,\Psi_j)$, and let $\omega_T(0)=({\Phi}_{0}, I)$.  
We have 
\begin{align*}
T&=\sum_{j=1}^k T ({\Phi}_j-{\Phi}_{j-1})\\
&=\sum_{j=1}^k ({\Psi}_j+{\Psi}_j^\perp) T ({\Phi}_j-{\Phi}_{j-1})\\
&=\sum_{j=1}^k {\Psi}_j^\perp T ({\Phi}_j-{\Phi}_{j-1}),
\end{align*}
that is, 
\begin{equation}\label{e15}
T
=\sum_{j=1}^k \sum_{i=1}^{n}\tens{{\Psi}_j^\perp z_i}{ ({\Phi}_j-{\Phi}_{j-1})w_i}.
\end{equation}
 Hence we have written the operator $T$ as the sum of $m$ rank-1 operators with $m\leq k\times n$. 

We show next that, for each   
$$
\tens{{\Psi}_j^\perp z_i}{ ({\Phi}_j-{\Phi}_{j-1})w_i}
$$
 in \eqref{e15}, the pairs 
 $({\Phi}_{0}, I)$ and  $({\Phi}_{l}, {\Psi}_{l})$,  where $l=1,\dots, k$, lie in $\Bil\bigl(\tens{{\Psi}_j^\perp z_i}{ ({\Phi}_j-{\Phi}_{j-1})w_i}, \N\bigr)$.
 
We begin by proving that  $({\Phi}_{0}, I)\in \Bil\bigl(\tens{{\Psi}_j^\perp z_i}{ ({\Phi}_j-{\Phi}_{j-1})w_i}, \N\bigr)$. 
We have  
$$
I\tens{{\Psi}_j^\perp z_i}{ ({\Phi}_j-{\Phi}_{j-1})w_i}{\Phi}_{0}=\tens{{\Psi}_j^\perp z_i}{ ({\Phi}_{0}-{\Phi}_{0})w_i}=0,
$$
since it is always the case that  ${\Phi}_{0}\leq {\Phi}_{j-1}\leq {\Phi}_j$.

 Let $\tens{{\Psi}_j^\perp z_i}{ ({\Phi}_j-{\Phi}_{j-1})w_i}$ be an  operator in \eqref{e15} and let $l$ be an integer such that $1\leq l\leq k$.   We have two possibilites: $j\leq l$ or $l<j$. 
 
Suppose firstly that $j\leq l$. Then 
\begin{equation}\label{e16}
{\Psi}_{l}\tens{{\Psi}_j^\perp z_i}{ ({\Phi}_j-{\Phi}_{j-1})w_i}{\Phi}_{l}=0,
\end{equation}
since ${\Psi}_{l}{\Psi}_j^\perp=0$.
Now let $l<j$, that is, $l\leq j-1$. In this case, 
$$
{\Phi}_{l}({\Phi}_j-{\Phi}_{j-1})w_i=0,
$$
since ${\Phi}_{l}\leq {\Phi}_{j-1}$. Consequently, \eqref{e16} holds also when $l<j$.

Suppose now that the nest $\N$ is continuous. We claim that for each $j=1,\dots, k$, 
$\Psi_j^\perp
=\sigma(\Psi_{j-1}^\perp)$.

Observe that, by Lemma \ref{reach_charact} (i), (ii), 
$$
{\yinn}(({\Psi_{j-1}}){^\perp})={\Phi}_{j-1}< {\Phi}_j=\yinn({\Psi_j}^\perp).
$$
Hence, by the definitions \eqref{eq2}, \eqref{eqq} , we have 
$$
\Psi_{j-1}^\perp\leq \sigma(\Psi_{j-1}^\perp)\leq \Psi_j^\perp.
$$
Consequently, by Lemma \ref{reach_charact} (i), either 
$$\varphi_T\bigl(\sigma(\Psi_{j-1}^\perp)\bigr)=\varphi_T(\Psi_{j-1}^\perp)
$$ or

$$\varphi_T\bigl(\sigma(\Psi_{j-1}^\perp)\bigr)=\varphi_T(\Psi_j^\perp).
$$

 If $ \sigma(\Psi_{j-1}^\perp)<\Psi_j^\perp$, then there would exist a projection $Q$ such that 
$$
   \sigma(\Psi_{j-1}^\perp)<Q<\Psi_j^\perp,
$$
since the nest $\N$ and, consequently, the nest $\N^\perp$ are continuous.

It follows that either 
$$\yinn(Q)=\yinn(\Psi_j^\perp)
$$ or 
$$\yinn(Q)=\yinn\bigl(\sigma(\Psi_{j-1}^\perp)\bigr).
$$ 
Recall here that $\Omega_T$ consists of $k+1$ elements satisfying 
$$
({\Phi}_{0}, I)\preceq \dots\preceq ({\Phi}_{j-1}, {\Psi}_{j-1})\preceq ({\Phi}_j, {\Psi}_j)\preceq\dots\preceq (I, \Phi_k)
$$ 
and that, by Lemma \ref{reach_charact} (ii), this order is strict.

In the first case $Q$ would have to coincide with $\Psi_j^\perp$, and in the second $Q$ would have to be equal to $\sigma(\Psi_{j-1}^\perp)$, contradicting  the initial assumption of $Q$ being different from these projections.

Finally, 
$$
\tens{{\Psi}_j^\perp z_i}{ ({\Phi}_j-{\Phi}_{j-1})w_i}=
\sigma(\Psi_{j-1}^\perp)\bigl(\tens{{\Psi}_j^\perp z_i}{ ({\Phi}_j-{\Phi}_{j-1})w_i})\Phi_{j-1}^\perp,
$$
as required.
\end{proof}

\begin{rem}\label{remex}It is worth to make a note of the following fact already outlined in the proof above. Let $\nest$ be a continuous nest,  let $T\neq 0$ be a rank-$n$ operator and let $\Omega_T=\{(\Phi_j, \Psi_j)\colon j=1,\dots, k\}$. Then, for each $j=1,\dots, k$, we have
$\Psi_j^\perp
=\sigma(\Psi_{j-1}^\perp)$.
\end{rem}

\begin{lem}\label{l_px2}
Let $\N$ be a continuous nest, let $T$ be a finite rank operator with kernel set $\Omega_T=\{(\Phi_j, \Psi_j)\colon j=1, \dots, k\}$ and let   
 $P$ be a projection in $\nest$ such that 
  $\Phi_{j}<P\leq \Phi_{j+1}$ for some $j$. Then there exists $x\in \H$ such that 
$
\hat{P}_{x}=\Phi_{j}$,  $P_{x}=P$ and $P_{Tx}=\sigma(\Psi_{j}^\perp)
$.
\end{lem}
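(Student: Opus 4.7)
The plan is to combine Proposition~\ref{p_prelim}(i), which controls $P_x$ and $\hat{P}_x$, with a construction in the spirit of Case~2 of the proof of Lemma~\ref{reach_charact}(iii), adapted so that the constructed vectors lie in $(P-\Phi_j)(\H)$. First, Proposition~\ref{p_prelim}(i) applied to the pair $\Phi_j<P$ yields $w\in(P-\Phi_j)(\H)$ with $P_w=P$ and $\hat{P}_w=\Phi_j$; Remark~\ref{remex} identifies the target through the identity $\sigma(\Psi_j^\perp)=\Psi_{j+1}^\perp$. Using continuity of $\N$ (hence of $\N^\perp$), choose a strictly increasing sequence $(Q_n)\subset\N$ with $\Psi_j^\perp\leq Q_1$ and $\bigvee_n Q_n=\Psi_{j+1}^\perp$, and set
$$
\Gamma=\bigl\{k\in\nats\colon (Q_{k+1}-Q_k)\,T\,(P-\Phi_j)(\H)\neq\{0\}\bigr\}.
$$

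The crucial step, and the main obstacle, is to prove that $\Gamma$ is infinite. Assume by contradiction that $\Gamma$ is finite with $K=\max\Gamma$. For every $u\in(P-\Phi_j)(\H)$ one has $u\in\Phi_{j+1}(\H)$, whence $Tu\in\Psi_{j+1}^\perp(\H)$ by $(\Phi_{j+1},\Psi_{j+1})\in\Bil(T,\N)$; since also $(Q_{k+1}-Q_k)Tu=0$ for all $k>K$, the strong convergence $Q_n\to\Psi_{j+1}^\perp$ yields $Tu=Q_{K+1}Tu$, i.e.\ $Tu\in Q_{K+1}(\H)$. Combined with $T\Phi_j(\H)\subseteq\Psi_j^\perp(\H)\subseteq Q_{K+1}(\H)$, this forces $TP(\H)\subseteq Q_{K+1}(\H)$, i.e.\ $(P,Q_{K+1}^\perp)\in\Bil(T,\N)$, whence $\varphi_T(Q_{K+1})\geq P>\Phi_j$. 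However, \eqref{eq2} shows that $\Psi_{j+1}^\perp$ is the infimum of the level set $\{Q\in\N\colon\varphi_T(Q)=\Phi_{j+1}\}$, so $Q_{K+1}<\Psi_{j+1}^\perp$ together with the order-preservation of $\varphi_T$ and the finiteness of its range $\{\Phi_1,\dots,\Phi_k\}$ forces $\varphi_T(Q_{K+1})\leq\Phi_j$, a contradiction.

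For each $k\in\Gamma$ choose a unit vector $u_k\in(P-\Phi_j)(\H)$ with $(Q_{k+1}-Q_k)Tu_k\neq 0$, and set $y=\sum_{k\in\Gamma}k^{-2}u_k$, convergent in $(P-\Phi_j)(\H)$. The argument of Proposition~\ref{p_prelim}(i), mirrored in Case~2 of the proof of Lemma~\ref{reach_charact}(iii), then yields $P_{Ty}=\Psi_{j+1}^\perp$. Taking $x=w+y\in(P-\Phi_j)(\H)$ and combining $w$ and $y$ exactly as in the last paragraph of Case~2 of the proof of Lemma~\ref{reach_charact}(iii) gives $P_x=P$, $\hat{P}_x=\Phi_j$, and $P_{Tx}=\Psi_{j+1}^\perp=\sigma(\Psi_j^\perp)$, as required.
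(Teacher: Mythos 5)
Your proposal follows the paper's own proof almost step for step: the same appeal to Proposition \ref{p_prelim}(i) and Remark \ref{remex}, the same strictly increasing sequence $(Q_n)$ exhausting $\sigma(\Psi_j^\perp)$, the same set $\Gamma$, and the same series $y=\sum_{k\in\Gamma}k^{-2}u_k$. Your argument that $\Gamma$ is infinite is correct and in fact spelled out in more detail than the paper's (which argues via a projection $P'$ strictly between $\Psi_j^\perp$ and $\sigma(\Psi_j^\perp)$ and \eqref{eqq}); up to that point there is nothing to object to.

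The gap is in the final step. You set $x=w+y$ and invoke the combination argument from the last paragraph of Case 2 of the proof of Lemma \ref{reach_charact}(iii), but that argument works there only because of a \emph{strict} inequality $P_y<P_x$ (respectively $P_{Tx}<P_{Ty}$), which forces $(I-Q)(x+y)=(I-Q)x\neq 0$ on the relevant range of $Q$ and rules out cancellation. Here no such strictness is available for the conditions $P_x=P$ and $\hat{P}_x=\Phi_j$: the vector $y$ is only constrained to lie in $(P-\Phi_j)(\H)$, and the $u_k$ may be perturbed by unit multiples of vectors in $\ker T\cap(P-\Phi_j)(\H)$ (which has finite codimension in the infinite-dimensional space $(P-\Phi_j)(\H)$) without violating $(Q_{k+1}-Q_k)Tu_k\neq 0$. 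One can therefore arrange, say, $P_y=P$ and $(P-Q)y=-(P-Q)w$ for some $Q\in\N$ with $\Phi_j<Q<P$, in which case $w+y\in Q(\H)$ and $P_{w+y}<P$; the condition $\hat{P}_{w+y}=\Phi_j$ can fail in the same way. This is exactly why the paper takes $x=(P-P_y+\hat{P}_y-\Phi_j)z+y$ rather than $z+y$: it first deletes the component of $z$ in the band $P_y\ominus\hat{P}_y$ where $y$ lives, so no cancellation can occur. A related omission: you never dispose first of the case $P_{Tw}=\sigma(\Psi_j^\perp)$ (where one simply takes $x=w$); if $P_{Tw}=P_{Ty}$, then even $P_{T(w+y)}=\sigma(\Psi_j^\perp)$ is not guaranteed, whereas for $P_{Tw}<P_{Ty}$ it is. Both defects are repairable by adopting the paper's formula for $x$, but as written the last step does not follow.
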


\begin{proof}
 Let $1\leq j< k$ be an integer and let $P\in \nest$ be a projection such that $\Phi_{j}<P\leq \Phi_{j+1}$. By Proposition \ref{p_prelim}, there exists $z\in \H$ such that  
$
\hat{P}_{z}=\Phi_{j}$ and  $P_{z}=P$. 

The proof is complete if $P_{Tz}=\sigma(\Psi^\perp_{j})$.  If this is not the case, then 
$$P_{Tz}<\sigma(\Psi^\perp_{j})=\Psi^\perp_{j+1}$$ (cf. Remark \ref{remex}). 
Let $(Q_n)$ be a strictly increasing sequence in  $\nest$ such that $\sigma(\Psi^\perp_{j})=\vee\{Q_n\colon n\in \mathbb{N}\}$ (cf. \cite[Proposition 3 and Corollary 4]{BCW}).

Define 
$$
\Gamma=\{n\in \mathbb{N}\colon (Q_{n+1} -  Q_n)T(P-\Phi_{j})\neq 0\}.
$$
This set is infinite. In fact, if one assumed that $\Gamma$ is finite, then there would exist a projection $P'\in \nest$ such that 
\begin{equation}\label{extra}
\Psi^\perp_{j} <P'<\sigma(\Psi^\perp_{j})
\end{equation}
with 
$
P'^\perp T(P-\Phi_{j})=0
$. But, by \eqref{eqq}, this would imply that $\sigma(\Psi^\perp_{j})<P'$, which  contradicts \eqref{extra}.

Let $(x_n)$ be a sequence with $x_n\in (P-\Phi_{j})(\H)$ such that, for all $n\in \mathbb{N}$,  $\|x_n\|=1$ and
$$
(Q_{n+1} - Q_n)T(P-\Phi_{j})x_n\neq 0.
$$
Let $y=\sum_{n=1}^\infty \frac{1}{n^2} x_n$.
It follows that  $P_{Ty}=\sigma(\Psi^\perp_{j})$. 

Setting $x=(P - P_{y} + \hat{P}_{y} - \Phi_{j})z+y$  ends the proof.
\end{proof}

\section{Operator decomposition}\label{s_decomp}
Recall that, given a nest $\N$ and the corresponding nest algebra $\nestalg$,  a subspace $\M$ of $B(\H)$ is called a Lie $\nestalg$-module if $[\M, \nestalg]\subseteq \M$. Here $[\,,]$ denotes the Lie bracket, that is, for $A, B\in B(\H)$, 
$$
[A,B]=AB-BA.
$$

 A finite rank operator $T$ in a subset $\X$ of $B(\H)$ 
  is said to be  \emph{decomposable in $\X$} if $T$ can be written as the sum of finitely many rank-1 operators in $\X$. 
   The set $\X$ is  \emph{decomposable} if every finite rank operator in $\X$ is decomposable.

We apply here the results and techniques of Section \ref{s_kernelmap} to prove the main theorem of this section:

\begin{thm}
\label{rankndecomp}
Let $\nest$ be a continuous nest. Then every norm closed Lie $\nestalg$-module is decomposable.
 \end{thm}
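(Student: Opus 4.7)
The plan is to use Theorem~\ref{sw_tight_n_decomp} to obtain a kernel-respecting rank-1 decomposition of any finite rank $T \in \M$, and then to show via Lie commutators that each rank-1 summand can be realised in $\M$. Let $T \in \M$ have finite rank with $\Omega_T = \{(\Phi_j, \Psi_j) : j = 0, 1, \ldots, k\}$, where $(\Phi_0, \Psi_0) = (\Phi_0, I)$. Theorem~\ref{sw_tight_n_decomp} and Remark~\ref{remex} give the strip decomposition $T = \sum_{j=1}^k T_j$ with $T_j = \sigma(\Psi_{j-1}^\perp) T (\Phi_j - \Phi_{j-1})$, each a sum of rank-1 operators confined to the $j$-th strip of $\Omega_T$. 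I would first argue, by induction on $k$, that each $T_j$ lies in $\M$: the commutator $[T, P]$ and the iterated commutator $[[T, P], P]$ with $P \in \nest \subseteq \nestalg$ lie in $\M$, and their linear combinations yield the off-diagonal blocks $P^\perp T P$ and $P T P^\perp$ in $\M$; letting $P$ run through the $\Phi_j$'s (and using the continuity of $\nest$ to insert intermediate projections where needed, with norm-closedness of $\M$ absorbing any limits) isolates each $T_j$.

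Within a single strip $T_j \in \M$, I would then reduce the rank by one. Lemma~\ref{l_px2} supplies, for any $P$ with $\Phi_{j-1} < P \le \Phi_j$, a witness vector $x \in \H$ with $\hat{P}_x = \Phi_{j-1}$, $P_x = P$, and $P_{Tx} = \sigma(\Psi_{j-1}^\perp)$. For rank-1 elements $a \otimes b \in \nestalg$ (of which the continuous nest algebra has abundance whenever $P_a \le \hat{P}_b$), the commutator $[T_j, a \otimes b] = T_j a \otimes b - a \otimes T_j^* b$ lies in $\M$. Choosing $b$ in the orthogonal complement of $\operatorname{range}(T^*)$ eliminates the second term, leaving the rank-1 operator $T_j a \otimes b \in \M$; picking $a$ via Lemma~\ref{l_px2} so that $T_j a$ lies along a prescribed rank-1 summand of $T_j$ from Theorem~\ref{sw_tight_n_decomp} then extracts that summand as an element of $\M$. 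Subtracting it from $T_j$ leaves a member of $\M$ of strictly smaller rank, and rank induction completes the proof.

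The main obstacle will be the second stage: simultaneously satisfying the nest-algebra constraint $P_a \le \hat{P}_b$, the kernel condition $T^* b = 0$, and the targeting condition on $T_j a$. These three requirements interact nontrivially, and their compatibility depends crucially on the continuity of $\nest$, which provides both enough intermediate projections (via Lemma~\ref{l_px2}) and enough room in the orthogonal complement of the finite-dimensional range of $T^*$. The known failure of decomposability for Lie ideals of nest algebras with atoms (discussed in the introduction) confirms that the continuity hypothesis cannot be dropped.
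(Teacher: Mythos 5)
Your first step---invoking Theorem \ref{sw_tight_n_decomp} to obtain the strip decomposition---is exactly where the paper starts, but the second half of your argument has two genuine gaps. First, the commutator identities $\tfrac12\bigl([[T,P],P]\pm[T,P]\bigr)$ only place the off-diagonal corners $\ortho{P}TP$ and $PT\ortho{P}$ in $\module$ (this is \cite[Lemma 1]{OS}); the strip $T_j=\Psi_j^\perp T(\Phi_j-\Phi_{j-1})=T(\Phi_j-\Phi_{j-1})$ is \emph{not} such a corner for any single $P\in\nest$, and refining the partition only shows that the \emph{sum} $U_F(T)+L_F(T)$ of corners converges to $T$, not that any individual strip lies in $\module$. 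Second, and more seriously, the extraction step fails. The bracket $[T_j,\tens{a}{b}]=\tens{T_ja}{b}-\tens{a}{T_j^*b}$ with $T_j^*b=0$ (note this requires $b\perp\operatorname{range}(T_j)$, not $b\perp\operatorname{range}(T^*)$) does yield a rank-one operator $\tens{T_ja}{b}$ in $\module$, but its right leg $b$ is pinned down by the orthogonality requirement and by $P_a\leq\jproj{P}_b$, so it cannot be made equal to the prescribed vector $(\Phi_j-\Phi_{j-1})w_i$ of the summand you are targeting. Worse, subtraction does not reduce rank: writing $T_j=\sum_i\tens{u_i}{v_i}$ with both families independent, one gets $T_j-\tens{T_ja}{b}=\sum_i\tens{u_i}{(v_i-\bar c_ib)}$, which still has rank $\operatorname{rank}(T_j)$ whenever $b\notin\operatorname{span}\{v_i\}$ --- precisely the situation your choice of $b$ forces. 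So the rank induction does not close, and you have correctly sensed this obstacle without resolving it.

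The missing idea is the paper's corner-filling mechanism, which removes the need to control the right leg at the moment of extraction. Lemmas \ref{allrank1} and \ref{allrank1_nalg}, assembled in Theorem \ref{t_allrank1} via the norm convergence of the triangular truncations $U_F+L_F$, show that a single rank-one operator $\tens{x}{y}\in\module$ forces $\module$ to contain \emph{every} rank-one operator in the corner $P_x\boundop\ortho{\jproj{P}_y}$; this is what lets one replace the constrained vector $b$ by an arbitrary one. Theorem \ref{rank1_module} then manufactures, for each $(\Phi,\Psi)\in\Omega_T$, an initial rank-one operator with the correct corner data by bracketing $\ortho{P'}TP'$ (which is in $\module$ by your commutator observation) against rank-one elements of $\nestalg$ whose vectors are supplied by Lemma \ref{l_px2}, and concludes that $\module$ contains all rank-one operators in $\sigma(\Psi^\perp)\boundop\ortho{\Phi}$. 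Since every summand $\tens{x_r}{y_r}=\sigma(\Psi_r^\perp)(\tens{x_r}{y_r})\ortho{\Phi_r}$ produced by Theorem \ref{sw_tight_n_decomp} lies in such a corner, each lies in $\module$ directly; no strip isolation and no rank induction are needed.
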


In other words,  every finite rank operator lying in a norm closed Lie module of a continuous nest algebra  is a sum of finitely many rank-1 operators in the module.  In general,  the continuity of the nest cannot be avoided, as Theorem \ref{rankndecomp} can fail in a more general setting.  For example, the Lie ideal $\mathbb{C} I$ in the  algebra   of the $n\times n$ upper triangular complex matrices is not decomposable (see also Remark \ref{rem003} below).

Before being able to prove Theorem \ref{rankndecomp}, we need to consider firstly some auxilary results.

\begin{lem}
\label{allrank1}
Let $\nest$ be a continuous nest  and let $\module$ be a norm closed Lie $\nestalg$-module. If $\tens{x}{y}$ is a rank-1 operator in $\module$, then, for all $z, w \in \hilbsp$,   $P \in \nest$, the operator $\ortho{P}(\tens{P_x z}{\ortho{\jproj{P}_y} w})P$ lies in $\module$.
\end{lem}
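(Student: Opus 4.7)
The plan combines a Lie-bracket sandwich identity with two rank-one bracket calculations, plus a norm-closure approximation enabled by continuity of $\nest$.

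First, one may assume $\jproj{P}_y < P < P_x$ in $\nest$, since otherwise either $\ortho{P} P_x = 0$ or $P \ortho{\jproj{P}_y} = 0$ and the target vanishes. Because $P, \ortho{P} \in \nestalg$, the map $[P,\cdot]$ stabilises $\module$. Using $P^2 = P$ one computes $[P, [P, T]] = PT + TP - 2PTP$, so that $[P,T] - [P,[P,T]] = -2\ortho{P} T P$. Hence $\ortho{P} T P \in \module$ for every $T \in \module$, and with $T = \tens{x}{y}$ this produces
\[ T_0 := \tens{\ortho{P} x}{Py} \in \module. \]

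By continuity of $\nest$ and \cite[Proposition~3 and Corollary~4]{BCW}, choose a strictly increasing sequence $(Q_n)$ in $\nest$ with $Q_n < P_x$, $\bigvee Q_n = P_x$, and a strictly decreasing sequence $(R_n)$ with $\jproj{P}_y < R_n$, $\bigwedge R_n = \jproj{P}_y$, and eventually $R_n < P$. Set $\alpha_n := \ortho{P} Q_n z \in (Q_n - P)\hilbsp$, so $P_{\alpha_n} \leq Q_n < P_x$ and $\ortho{P_{\alpha_n}} x \neq 0$. The rank-one operator $A_n := \tens{\alpha_n}{\ortho{P_{\alpha_n}} x}$ lies in $\nestalg$ since $P_{\alpha_n} \leq \jproj{P}_{\ortho{P_{\alpha_n}} x}$, and because $\langle \alpha_n, Py\rangle = \langle Q_n z, \ortho{P} P y\rangle = 0$ the term $T_0 A_n$ vanishes, leaving
\[ [A_n, T_0] = \|\ortho{P_{\alpha_n}} x\|^2 \tens{\alpha_n}{Py} \in \module. \]
Thus $\tens{\alpha_n}{Py} \in \module$, and letting $Q_n \uparrow P_x$ yields $T_1 := \tens{\ortho{P} P_x z}{Py} \in \module$ by norm-closure. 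An analogous computation with $\beta_n := (P - R_n)w$ and $B_n := \tens{R_n y}{\beta_n} \in \nestalg$ (note $P_{R_n y} \leq R_n \leq \jproj{P}_{\beta_n}$ and $\langle \ortho{P} P_x z, \beta_n\rangle = 0$) gives
\[ [B_n, T_1] = -\|R_n y\|^2 \tens{\ortho{P} P_x z}{\beta_n} \in \module, \]
so $\tens{\ortho{P} P_x z}{\beta_n} \in \module$. Finally, as $R_n \downarrow \jproj{P}_y$ one has $\beta_n \to (P - \jproj{P}_y)w = P \ortho{\jproj{P}_y} w$ in norm, placing the target operator in $\module$.

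The main obstacle is that a direct one-shot bracket with a rank-one operator whose first factor is exactly $\ortho{P} P_x z$ (respectively, whose second factor is exactly $P \ortho{\jproj{P}_y} w$) cannot be expected to succeed: for generic $z,w$ one has $P_{\ortho{P} P_x z} = P_x$ and $\jproj{P}_{P \ortho{\jproj{P}_y} w} = \jproj{P}_y$, which forces the scalar coefficients $\|\ortho{P_{\alpha}} x\|^2$ and $\|R\, y\|^2$ that control the brackets to collapse to zero. Continuity of $\nest$ is used precisely here, to supply strictly monotone approximating sequences $Q_n \uparrow P_x$ and $R_n \downarrow \jproj{P}_y$ for which the required strict inequalities $P_{\alpha_n} < P_x$ and $R_n > \jproj{P}_y$ hold, and the norm-closure of $\module$ then recovers the limit.
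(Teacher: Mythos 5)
Your proof is correct and takes essentially the same route as the paper's: compress $\tens{x}{y}$ to the corner $\ortho{P}\,\cdot\,P$, then replace $x$ and $y$ in two stages by bracketing against rank-one elements of $\nestalg$ whose coupling coefficients are nonzero squared norms, finishing each stage with a norm-density argument supplied by the continuity of $\nest$. The only cosmetic differences are that you derive $\ortho{P}TP\in\module$ from the double-bracket identity rather than citing \cite[Lemma 1]{OS}, and that you fold the paper's two cases ($P_z<P_x$ versus $P_z=P_x$, and likewise for $\jproj{P}_w$) into a single uniform approximation by the sequences $(Q_n)$ and $(R_n)$.
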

\begin{proof}
Firstly, notice that, if $z=0$, $w=0$, $\ortho{P}(\tens{x}{y})P = 0$, $P_x \leq \jproj{P}_y$,  or $P=0,I$, the assertion is trivially verified. Thus, assume otherwise. 

Observe that we can also assume that $z = \ortho{P}z = P_x z$ and $w = Pw = \ortho{\jproj{P}_y} w$. It follows that 
\begin{equation}\label{eee}
\hat{P}_y\leq \hat{P}_w\leq P_w\leq P<P_z\leq P_x.
\end{equation}
By \cite[Lemma 1]{OS},  it is also the case that, for all $P\in \N$, $\ortho{P}(\tens{x}{y})P \in \module$.

We begin by showing that $\ortho{P}(\tens{P_x z}{y})P \in \module$.

Case 1. $P_z < P_x$.

   For $a\in\H$, consider the operator $\tens{z}{\ortho{P_z}a}\in \nestalg$.
  We have that $\lieprod{\tens{z}{\ortho{P_z}a}}{\ortho{P}(\tens{x}{y})P}$ lies in $\M$ and
  
\begin{align*}
\lieprod{\tens{z}{\ortho{P_z}a}}{\ortho{P}(\tens{x}{y})P} &= \inner{\ortho{P}x}{\ortho{P_z}a} \; \tens{z}{Py} - \inner{z}{Py} \; \tens{\ortho{P}x}{\ortho{P_z}a} \\
&= \inner{\ortho{P}x}{\ortho{P_z}a} \; \tens{z}{Py},
\end{align*}
since $\inner{z}{Py}= \inner{\ortho{P}z}{Py}= 0$.

Recalling  that $P < P_z < P_x$, we can choose $a\in \H$ such that $a=\ortho{P_z}x\neq 0$. Hence  $\inner{\ortho{P}x}{\ortho{P_z}a} = \norm{\ortho{P_z}x}^2 \neq 0$ and, therefore,  $\tens{z}{Py}=\ortho{P}(\tens{P_x z}{y})P$ lies in  $\module$. 

 Case 2. $P_z = P_x$.
 
  Following the proof of  \cite[Theorem 3.1]{O2},  observe that
$$
P_x =P_z =\vee\{Q \in \nest\colon  Q < P_z\},
$$
from which follows that there exists a sequence $(z_n)$ in 
$$\union_{Q\in \tiny{\N}, Q < P_z} Q(H)$$
 such that $z_n= \ortho{P}z_n = P_x z_n$ which converges to $z$ in the norm topology. 

Hence, by Case 1., $(\tens{z_n}{Py})$ is a convergent sequence in the norm closed Lie $\nestalg$-module $\module$, from which follows that its limit $\tens{z}{Py}$ also lies in $\module$.

We have shown that $\ortho{P}(\tens{P_x z}{y})P \in \module$ and will show next that $\ortho{P}(\tens{P_x z}{\ortho{\jproj{P}_y} w})P \in \module$.

As observed before, we can assume that $w=\hat{P}_y^\perp w$. Hence either $\hat{P}_y<\hat{P}_w$ or $\hat{P}_y=\hat{P}_w$. 

Suppose now that $\jproj{P}_y < \jproj{P}_w$. For $b \in \hilbsp$, consider the operator $\tens{\jproj{P}_w b}{w}$ in $\nestalg$. Then 
$\lieprod{\tens{z}{Py}}{\tens{\jproj{P}_w b}{w}}$ lies in $\M$  and 
\begin{align*}
\lieprod{\tens{z}{Py}}{\tens{\jproj{P}_w b}{w}}
 &= \inner{\jproj{P}_w b}{Py} \; \tens{z}{w} - \inner{z}{w} \; \tens{\jproj{P}_w b}{Py}\\
&= \inner{\jproj{P}_w b}{Py} \; \tens{z}{w},
\end{align*}
since $\inner{z}{w} = \inner{\ortho{P}z}{Pw}= 0$.

 Observing that $\jproj{P}_y < \jproj{P}_w < P$ (cf. \eqref{eee}), we can choose $b = \jproj{P}_wy\neq 0$. 
Hence  
 $\inner{\jproj{P}_w b}{Py} = \norm{\jproj{P}_w y}^2 \neq 0$ and 
 it follows that  $\tens{z}{w} \in \module$. 
 
If on the other hand $\jproj{P}_y = \jproj{P}_w$, then 
$$
\hat{P}_w^\perp = \hat{P}_y^\perp = \vee\{ \ortho{Q} \colon Q \in \nest, \jproj{P}_y < Q\},
$$
 from which follows that there exists a sequence $(w_n)$ in 
 $$\union_{Q\in {\tiny \N}, \jproj{P}_y < Q} \ortho{Q}(H)$$
  converging to $w$ in the norm topology and  such that $w_n = Pw_n = \jproj{P}_y^\perp w_n$. 
 Hence $(\tens{z}{w_n})$ is a convergent sequence in $\module$ whose limit also lies in $\module$.
\end{proof}

\begin{lem}
\label{allrank1_nalg}
Let $\nest$ be a continuous nest  and let $\module$ be a norm closed Lie $\nestalg$-module. If $\tens{x}{y}$ is a rank-1 operator in $\module$, then, for all $z, w \in \hilbsp$ and all  $P \in \nest$, the operator $P(\tens{P_x z}{\ortho{\jproj{P}_y} w})\ortho{P}$ lies in $\module$.
\end{lem}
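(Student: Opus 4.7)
The plan is to mirror the strategy of Lemma \ref{allrank1}, changing only the seed rank-1 operator and the orientation of the auxiliary Lie brackets. The first task is to establish the analogue of \cite[Lemma 1]{OS} for the opposite corner, namely $P(\tens{x}{y})\ortho{P} = \tens{Px}{\ortho{P}y} \in \module$. Since $P \in \nestalg$, both $[P, \tens{x}{y}]$ and $[[P, \tens{x}{y}], P]$ lie in $\module$, and expanding via $x = Px + \ortho{P}x$ and $y = Py + \ortho{P}y$ yields
\begin{align*}
[P, \tens{x}{y}] &= \tens{Px}{\ortho{P}y} - \tens{\ortho{P}x}{Py}, \\
[[P, \tens{x}{y}], P] &= -\tens{Px}{\ortho{P}y} - \tens{\ortho{P}x}{Py}.
\end{align*}
Subtracting these two equalities yields $\tens{Px}{\ortho{P}y} \in \module$ (while adding them recovers \cite[Lemma 1]{OS}).

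With this seed in hand I would proceed exactly as in the proof of Lemma \ref{allrank1}, in two Lie-bracket stages. After the trivial reductions one may assume $z = PP_x z$ and $w = \ortho{P}\ortho{\jproj{P}_y}w$, so the target becomes $\tens{z}{w} \in \module$. The first stage replaces $Px$ by $z$: bracket the seed $\tens{Px}{\ortho{P}y}$ against a rank-1 operator $\tens{z}{a} \in \nestalg$, noting that $\inner{z}{\ortho{P}y} = \inner{Pz}{\ortho{P}y} = 0$ kills one of the two commutator terms and leaves $\inner{Px}{a}\tens{z}{\ortho{P}y}$. Taking $a = \ortho{P_z}x$ (as in Lemma \ref{allrank1}) places $\tens{z}{a}$ in $\nestalg$ because $(P_z)_-\ortho{P_z} = 0$, and makes the scalar equal to $\norm{(P - P_z)x}^2$. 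The second stage proceeds symmetrically on the $y$-side: bracket $\tens{z}{\ortho{P}y}$ against a suitable $\tens{b}{w} \in \nestalg$ (with $b$ playing the role of $\jproj{P}_w y$), killing the other commutator term via $\inner{z}{w} = 0$ and extracting a non-zero scalar multiple of $\tens{z}{w}$.

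The main obstacle is the two degenerate configurations, one per stage, in which the natural scalar vanishes. These are the exact analogues of Case 2 in the proof of Lemma \ref{allrank1}, and I expect to resolve them in the same manner: by using the continuity of $\nest$ (and hence of $\nest^\perp$) to express the relevant projection as a supremum of strictly smaller (respectively strictly larger) projections in the nest, approximating $z$ (respectively $w$) accordingly, and then appealing to the norm-closedness of $\module$ to pass to the limit.
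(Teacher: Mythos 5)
Your opening computation is correct: combining $[P,\tens{x}{y}]$ with $[[P,\tens{x}{y}],P]$ does yield $P(\tens{x}{y})\ortho{P}=\tens{Px}{\ortho{P}y}\in\module$, and this is a clean companion to \cite[Lemma 1]{OS}. The difficulty is that this operator is the wrong seed, and the two-stage bracket argument of \lemref{allrank1} does not survive being mirrored onto it. The paper proceeds differently: it brackets elements of $\nestalg$ against the \emph{uncompressed} operator $\tens{x}{y}$ so as to manufacture a single rank-one $\tens{e}{f}=P(\tens{e}{f})\ortho{P}\in\module\cap\nestalg$ with $P_e=P=\jproj{P}_f$, and then hands the problem to \cite[Theorem 3.1]{O2} applied to the norm closed Lie ideal $\module\cap\nestalg$, which delivers all rank ones in the corner $PB(\H)\ortho{P}$ at once.

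There are two concrete failures in your plan. First, your seed can be the zero operator while the target is not: $Px=0$ whenever $P\leq\jproj{P}_x$, and $\ortho{P}y=0$ whenever $P_y\leq P$, whereas the target $\tens{PP_xz}{\ortho{P}\ortho{\jproj{P}_y}w}$ is nonzero as soon as $P\wedge P_x\neq 0$ and $P\vee\jproj{P}_y\neq I$ and $z,w$ are chosen suitably (take, say, $P=\jproj{P}_x$ with $0<\jproj{P}_x<P_x$). No sequence of Lie brackets or norm limits leads from $0$ to a nonzero operator. Second, even when the seed is nonzero, your stage-one scalar is $\norm{(P-P_z)x}^2$, and, unlike the scalar $\norm{\ortho{P_z}x}^2$ of \lemref{allrank1} --- which is nonzero precisely because $P_z<P_x$ and $P_x$ is minimal among projections fixing $x$ --- the vector $(P-Q)x$ has no reason to be nonzero for $Q<P\wedge P_x$; it can vanish for a whole interval of $Q$. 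Indeed, since any admissible $a$ with $\tens{z}{a}\in\nestalg$ lies in $\ortho{P_z}(\H)$, the first stage can only reach vectors $z$ with $P_z\leq P_{Px}$, and $P_{Px}$ may be strictly smaller than $P\wedge P_x$: on $\H=L^2[0,1]$ with the nest of projections $N_t$ onto $L^2[0,t]$, take $x=\chi_{[0,1/4]}+\chi_{[3/4,1]}$ and $P=N_{1/2}$, so that $P_{Px}=N_{1/4}<N_{1/2}=P\wedge P_x$. This is not the boundary degeneracy of Case 2 of \lemref{allrank1} and is not cured by approximating $z$ from below; the obstruction is that the compression $P(\cdot)\ortho{P}$ has already discarded the part of $x$ (and, symmetrically in stage two, of $y$) that the bracket argument needs.
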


\begin{proof} 
Notice that, as in the proof of Lemma \ref{allrank1}, if $P=0,I$, then  the assertion is trivially verified. Thus, assume that $P\neq 0,I$. 

If $\tens{x}{y} \in \nestalg$ or, equivalently, if $P_x\leq \jproj{P}_y$, the assertion is a consequence of   \cite[Theorem 3.1]{O2}.

Assume then that $\jproj{P}_y < P_x$. It suffices to show that
  there exists a rank one operator $\tens{e}{f}$ such that $\tens{e}{f}=P(\tens{e}{f})\ortho{P} \in \module$, with $P_e = P=\jproj{P}_f$, and apply \cite[Theorem 3.1]{O2}. Notice that, in this case, $\tens{e}{f}$ lies in the  norm closed ideal $\M\cap\nestalg$, since $P(\tens{e}{f})\ortho{P} \in \nestalg$(cf. \cite[Lemma 3.3]{ringrose1965}).
  
  Let    $a,b \in \hilbsp$ and consider the Lie bracket

\begin{align}\label{a1}
\lieprod{\tens{a}{b}}{\tens{x}{y}} &= \inner{x}{b}\tens{a}{y} - \inner{a}{y}\tens{x}{b}.
\end{align}

We analyse firstly the  cases  $P = \jproj{P}_y$ (Case 1), $P = P_x$ (Case 2) and $\jproj{P}_y<P<P_x$ (Case 3). The remaining possibilities,  $P<\jproj{P}_y$ or $P_x<P$, will be dealt with in the last part of the proof.

Case 1. $P = \jproj{P}_y$.
 
Let $a\in\H$ be such that $P_{a} = P$ (cf. Proposition \ref{p_prelim} (i)).
 Hence $\inner{a}{y} = \inner{\jproj{P}_ya}{\ortho{\jproj{P}_y}y} = 0$.  
 
 Let $b=\ortho{P}x$. In these circumstances, 
 $$
 \inner{x}{b}=\inner{x}{\ortho{P}x} = \norm{\ortho{P}x}^2 \neq 0,
 $$
  since $P=\hat{P}_y<P_x$. 
  Moreover, since $a\otimes b\in \nestalg$, it follows from 
   \eqref{a1} that  $\tens{a}{y} \in \module$. Setting $e=a$ and $f=y$ concludes the proof of this case.
 
Case 2. $P = P_x$.

 Let $a= Py$ and let $b \in \H$ be  such that $\jproj{P_{b}} = P$ (cf. Proposition \ref{p_prelim} (i)). 
 Hence, $a\otimes b\in \T(\N)$, $\inner{a}{y} = \norm{Py}^2 \neq 0$ and $\inner{x}{b} = \inner{Px}{\ortho{P}b} = 0$. It follows from \eqref{a1} that  $\tens{x}{b} \in \module$. The proof is concluded by setting $e=x$ and $f=b$.

 Case 3. $\jproj{P}_y<P<P_x$. 
  
 We show now that $a$ in \eqref{a1} can be chosen with $P_{a}=P$ and $\inner{a}{y}=0$.  By \cite[Proposition 3 and Corollary 4]{BCW}, there exists  a strictly increasing sequence $(Q_n)$ in $\nest$ such that $P=\vee Q_n$. 
  Let $(a_n)$ be a sequence in $\H$ such that, for all $n\in \nats$, $a_n\in Q_{n+1}\ominus Q_n$, $\|a_n\|=1$ and $\inner{a_n}{y}=0$. Observe that, if   it were the case that for some $n$ there did not exist $a_n$ orthogonal to $y$, then $\dim(Q_{n+1}\ominus Q_n) = 1$ which is an impossibility, given the continuity of the nest.
  
  Then $a=\sum_{n=1}^\infty \frac{1}{n^2} a_n$ is orthogonal to $y$ and $P_a=P$. It is immediate that $a$ satisfies the requirements.
  
  Let $b =\ortho{P}x$. We have  $\inner{x}{b}=\inner{x}{\ortho{P}x} = \norm{\ortho{P}x}^2 \neq 0$. Hence, $a\otimes b\in \T(\N)$ and it follows from \eqref{a1} that $\tens{a}{y} \in \module$. The proof is concluded by setting $e=a$ and $f=y$.

 Finally, consider the remaining possibilities  $P<\jproj{P}_y$ or $P_x<P$.
 
 If $P<\jproj{P}_y$, then 
 $$
 P(\tens{P_x z}{\ortho{\jproj{P}_y} w})\ortho{P}
 =P_y(\tens{P_x Pz}{\ortho{\jproj{P}_y} w})\ortho{\jproj{P}_y}
 $$
 lies in $\M$ by Case 1. of this proof.

 If $P_x<P$, then 
 $$
 P(\tens{P_x z}{\ortho{\jproj{P}_y} w})\ortho{P}
 =P_x(\tens{P_x z}{P^\perp\ortho{\jproj{P}_y} w})\ortho{P_x}
 $$
 lies in $\M$ by Case 2. above.
\end{proof}

Before proving the next  theorem, we need a definition. 
Let $\mathcal{F} = \{(P_i)_{i \in \{0,...,k\}}:k \in \nats, P_i \in \nest, 0=P_0<...<P_{k}=I\}$ be the set  of totally ordered finite subsets of  a nest $\nest$ beginning with the 0 projection and ending with $I$, seen as a net when ordered by inclusion. Given an operator $T$ in $B(\H)$ and $F=(P_i)_{i \in \{0,...,k\}}$ in $\F$, define
\begin{align}
U_F (T) &= \sum_{i = 1}^k P_{i-1}T(P_i-P_{i-1})\label{a2}\\
L_F (T) &= \sum_{i = 1}^k \ortho{P_{i}}T(P_i-P_{i-1})\label{a3}\\
D_F (T) &= \sum_{i = 1}^k (P_i-P_{i-1})T(P_i-P_{i-1})= T - (U_F (T) + L_F (T)) \label{a4}
\end{align}
(cf. \cite{davidsonbook}). Observe that $T=U_F (T)+L_F (T)+D_F (T)$ and that $U_F (T)$ lies in $\nestalg$ whilst $L_F (T)$ lies in $\nestalg^*$.

\begin{thm}
\label{t_allrank1}
Let $\nest$ be a continuous nest, let $\module$ be a norm closed Lie $\nestalg$-module and let $\tens{x}{y}$ be a rank-1 operator in $\module$.
  Then, $\module$ contains all  rank-1 operators in $P_x\boundop \ortho{\jproj{P}_y}$.
\end{thm}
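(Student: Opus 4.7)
The plan is to approximate an arbitrary rank-1 operator $\tens{u}{v}$ in $P_x\boundop\ortho{\jproj{P}_y}$ by elements of $\module$ built out of Lemmas \ref{allrank1} and \ref{allrank1_nalg}, and then invoke the norm-closedness of $\module$. Observe first that $\tens{u}{v}\in P_x\boundop\ortho{\jproj{P}_y}$ is equivalent to $P_xu=u$ and $\jproj{P}_yv=0$. For each partition $F=(P_i)_{i=0}^{k}\in\F$ one has the decomposition $\tens{u}{v}=U_F(\tens{u}{v})+L_F(\tens{u}{v})+D_F(\tens{u}{v})$ from \eqref{a2}--\eqref{a4}; the strategy is to show that $U_F(\tens{u}{v})+L_F(\tens{u}{v})\in\module$ for every $F$, and to drive $D_F(\tens{u}{v})$ to zero in norm by refining $F$.

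For the triangular pieces, a direct computation gives
\[
U_F(\tens{u}{v})=\sum_{i=1}^{k}\tens{P_{i-1}u}{(P_i-P_{i-1})v},\qquad L_F(\tens{u}{v})=\sum_{i=1}^{k}\tens{\ortho{P_i}u}{(P_i-P_{i-1})v}.
\]
Because $\jproj{P}_y$ commutes with every projection in $\nest$ and annihilates $v$, each $(P_i-P_{i-1})v$ still lies in $\ortho{\jproj{P}_y}(\hilbsp)$. Applying Lemma \ref{allrank1_nalg} to $\tens{x}{y}$ with $P=P_{i-1}$, $z=u$ and $w=(P_i-P_{i-1})v$ produces exactly $\tens{P_{i-1}u}{(P_i-P_{i-1})v}\in\module$; applying Lemma \ref{allrank1} with $P=P_i$ and the same $z,w$ produces $\tens{\ortho{P_i}u}{(P_i-P_{i-1})v}\in\module$. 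Summing over $i$ then yields $U_F(\tens{u}{v})+L_F(\tens{u}{v})\in\module$ for every $F\in\F$.

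The diagonal piece $D_F(\tens{u}{v})=\sum_{i=1}^{k}\tens{(P_i-P_{i-1})u}{(P_i-P_{i-1})v}$ is a sum of rank-1 operators whose ranges and co-ranges sit in the mutually orthogonal subspaces $(P_i-P_{i-1})(\hilbsp)$, so
\[
\norm{D_F(\tens{u}{v})}=\max_{1\le i\le k}\norm{(P_i-P_{i-1})u}\cdot\norm{(P_i-P_{i-1})v}\le \norm{v}\max_{i}\norm{(P_i-P_{i-1})u}.
\]
Since $\norm{(P_i-P_{i-1})u}^2=\norm{P_iu}^2-\norm{P_{i-1}u}^2$ and the continuity of $\nest$ makes $P\mapsto \norm{Pu}^2$ a continuous monotone function on $\nest$ that attains every value in $[0,\norm{u}^2]$, given $\varepsilon>0$ one can refine $F$ so that every gap $\norm{P_iu}^2-\norm{P_{i-1}u}^2$ falls below $\varepsilon^2$. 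Therefore the net $(D_F(\tens{u}{v}))_{F\in\F}$ converges to $0$ in norm.

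Combining these two observations, $\tens{u}{v}$ is the norm limit along $\F$ of the operators $U_F(\tens{u}{v})+L_F(\tens{u}{v})\in\module$, and the norm-closedness of $\module$ yields $\tens{u}{v}\in\module$. The main obstacle is the norm convergence $D_F\to 0$, and it is precisely here that the continuity of $\nest$ is indispensable: atoms would block refinement across their gaps and leave an irreducible diagonal residue, in keeping with the known failure of the decomposability conclusion when the nest has atoms.
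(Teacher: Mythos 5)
Your proof is correct and follows essentially the same route as the paper: the same $U_F+L_F+D_F$ decomposition over the net $\F$, with Lemmas \ref{allrank1} and \ref{allrank1_nalg} supplying $U_F+L_F\in\module$ termwise and norm closedness finishing the argument. The only difference is that you prove $D_F(\tens{u}{v})\to 0$ directly via the intermediate-value property of $P\mapsto\norm{Pu}^2$ on a continuous nest, where the paper simply cites \cite[Proposition 4.3]{davidsonbook}.
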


\begin{proof}
 Let  $\tens{a}{b}$ be a rank-1 operator  such that $\tens{a}{b} = P_x\tens{a}{b}\ortho{\jproj{P}_y}$. By \cite[Proposition 4.3]{davidsonbook}, the net $(D_F(\tens{a}{b}))$ converges to zero in the norm topology. It follows from \eqref{a4} that $(U_F(\tens{a}{b}) + L_F(\tens{a}{b}))$ converges to $\tens{a}{b}$ in the norm topology.

 By Lemmas \ref{allrank1}, \ref{allrank1_nalg}, for each $F$, the operators $U_F(\tens{a}{b}), L_F (\tens{a}{b})$ lie in the Lie $\nestalg$-module $\module$. Hence $\tens{a}{b} \in \module$, since $\module$ is  norm closed. 
\end{proof}

\begin{rem} \label{rem003}In general,  the continuity of the nest cannot be lifted. For example, Theorem \ref{t_allrank1} does not hold if $\nestalg$ is the algebra of the $6\times 6$  upper triangular complex matrices. In these circumstances,  the 
  smallest Lie $\nestalg$-module $\M$ containing the matrix unit $E_{65}$  consists of the zero trace matrices having the first four columns equal to zero. The module $\M$ does not contain  $E_{66}$, for example. 
\end{rem}

\begin{thm}
\label{rank1_module}
Let $\nest$ be a continuous nest, let $\module$ be a norm closed Lie $\nestalg$-module, let $T$ be a finite rank operator in $\module$ and let $(\Phi, \Psi)$ be a pair of projections  in  the kernel set $\Omega_T$. Then $\module$ contains all  rank-1 operators in  $\sigma(\Psi^\perp) \boundop \ortho{\Phi}$.
\end{thm}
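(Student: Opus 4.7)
The plan is to reduce to Theorem~\ref{t_allrank1}: it suffices to produce a single rank-one operator $\tens{u}{v}\in\module$ with $P_u\ge\sigma(\Psi^\perp)$ and $\hat P_v\le\Phi$, since then $\sigma(\Psi^\perp)\boundop\ortho{\Phi}\subseteq P_u\boundop\ortho{\hat P_v}\subseteq\module$. If $(\Phi,\Psi)$ is the top element of $\Omega_T$ then $\Phi=I$ and the assertion is vacuous; otherwise $(\Phi,\Psi)=(\Phi_j,\Psi_j)$ with $j<k$, and Remark~\ref{remex} (using the convention $\Psi_0=I$) gives $\sigma(\Psi_j^\perp)=\Psi_{j+1}^\perp$, so the target slab is $\Psi_{j+1}^\perp\boundop\ortho{\Phi_j}$.

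Fix a strictly decreasing sequence $(P_n)$ in $\N$ with $\Phi_j<P_n\le\Phi_{j+1}$ and $\bigwedge_n P_n=\Phi_j$, available by continuity of the nest. Lemma~\ref{l_px2}---whose proof extends verbatim to $j=0$---produces $x_n\in\H$ with $\hat P_{x_n}=\Phi_j$, $P_{x_n}=P_n$ and $P_{Tx_n}=\Psi_{j+1}^\perp$. I then build a companion $b_n\in\ker T^*$ with $\hat P_{b_n}=P_n$ by mimicking the proof of Proposition~\ref{p_prelim}(i): pick a strictly decreasing $(Q_m)$ in $\N$ with $Q_m>P_n$ and $\bigwedge_m Q_m=P_n$, select unit vectors $b_{n,m}\in(Q_m-Q_{m+1})(\H)\cap\ker T^*$---nonzero because the continuous nest has no atoms, so $(Q_m-Q_{m+1})(\H)$ is infinite-dimensional while $T^*$ has finite rank---and put $b_n=\sum_m m^{-2}b_{n,m}$. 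The telescoping argument of Proposition~\ref{p_prelim}(i) yields $\hat P_{b_n}=P_n$, and $b_n\in\ker T^*$ by norm-closedness of that kernel. Since $P_{x_n}=P_n=\hat P_{b_n}$, the operator $\tens{x_n}{b_n}$ lies in $\nestalg$, so
\[[T,\tens{x_n}{b_n}]=\tens{Tx_n}{b_n}-\tens{x_n}{T^*b_n}=\tens{Tx_n}{b_n}\in\module,\]
and Theorem~\ref{t_allrank1} applied to this rank-one operator yields every rank-one in $\Psi_{j+1}^\perp\boundop\ortho{P_n}$.

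To pass from $\ortho{P_n}$ up to $\ortho{\Phi_j}$, take any rank-one $\tens{a}{v}\in\Psi_{j+1}^\perp\boundop\ortho{\Phi_j}$ and set $v_n=\ortho{P_n}v$: each $\tens{a}{v_n}$ lies in $\module$ by the previous step, and since $P_n\downarrow\Phi_j$ strongly we have $\|v_n-v\|=\|(P_n-\Phi_j)v\|\to 0$, so $\tens{a}{v}\in\module$ by norm-closedness. The principal obstacle is the construction of $b_n$: it must simultaneously satisfy $T^*b_n=0$, so that the Lie bracket collapses to a single rank-one term, and $\hat P_{b_n}=P_n$ exactly, so that Theorem~\ref{t_allrank1} delivers the full slab $\Psi_{j+1}^\perp\boundop\ortho{P_n}$ rather than a proper subspace. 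It is precisely here that continuity of the nest and finite rank of $T$ combine.
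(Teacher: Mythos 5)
Your proof is correct, but it takes a genuinely different route from the paper's. The paper first places the compressions $\ortho{P'}TP'$ in $\module$ via \cite[Lemma 1]{OS}, brackets these with rank-one operators $\tens{Qa}{\ortho{Q}P'h}$ of $\nestalg$, and splits into the cases $\Phi<\sigma(\Psi^\perp)$ and $\sigma(\Psi^\perp)\le\Phi$; you instead bracket $T$ itself with a rank-one $\tens{x_n}{b_n}\in\nestalg$ whose second leg is forced into $\ker T^*$, so that $[T,\tens{x_n}{b_n}]$ collapses to the single rank-one $\tens{Tx_n}{b_n}$ with $P_{Tx_n}=\sigma(\Psi^\perp)$ and $\jproj{P}_{b_n}=P_n$. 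The extra ingredient you supply --- that in a continuous nest each interval $Q_m\ominus Q_{m+1}$ is infinite dimensional while $\ker T^*$ has finite codimension, so each $b_{n,m}$ can be chosen in $\ker T^*$ --- is sound, as is the verification that $\jproj{P}_{b_n}=P_n$ exactly (if $\jproj{P}_{b_n}$ were strictly larger it would dominate some $Q_{m_0}$ and hence annihilate $b_{n,m_0}$, contradicting $\inner{b_n}{b_{n,m_0}}=m_0^{-2}$). Both arguments then rely on Lemma \ref{l_px2} to pin $P_{Tx}$ at $\sigma(\Psi^\perp)$, on Theorem \ref{t_allrank1}, and on a final norm-density step sliding $\ortho{P_n}$ up to $\ortho{\Phi}$. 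Your version buys a case-free argument and dispenses with the compression lemma of \cite{OS} at this stage; the paper's version avoids having to intersect nest intervals with $\ker T^*$. One point you rightly flag, and which the paper leaves implicit as well, is that Lemma \ref{l_px2} and Remark \ref{remex} must be read as covering the bottom pair $(\Phi_0,I)$ (index $j=0$ with the convention $\Psi_0=I$); the extension is routine but worth stating.
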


\begin{proof}  Let $(\Phi, \Psi)$ be a pair in the kernel set  $\Omega_T$ and suppose that $\Phi<I$, since the theorem holds trivially when $\Phi=I$.   The proof is split into the cases $\Phi < \sigma(\Psi^\perp)$ (Case 1) and  $\sigma(\Psi^\perp)\leq \Phi$ (Case 2) below. 

 Case 1. $\Phi < \sigma(\Psi^\perp)$.\\
 Let $P', Q$ be any projections in $\nest$ such that $\Phi < Q < P' < \sigma(\Psi^\perp)$. By \cite[Lemma 1]{OS},  $\ortho{P'}TP' \in \module$. 

By Proposition \ref{p_prelim}, we can also choose  $h \in \hilbsp$  such that $\jproj{P}_h = \Phi$. Let $a\in \H$ and consider the operator $\tens{Q a}{\ortho{Q} P' h}$ which, by 
by \cite{ringrose1965}, Lemma 3.3, lies in $\nestalg$.
Notice that both $a$ and  $h$ can be chosen such that the operator $\tens{Q a}{\ortho{Q} P' h}$ is non-zero.
It follows that
\begin{align*}
\lieprod{\ortho{P'}TP'}{\tens{Q a}{\ortho{Q}P'h}} &= \ortho{P'}TP'(\tens{Q a}{\ortho{Q}P'h}) - (\tens{Q a}{\ortho{Q}P'h})\ortho{P'}TP' \\
&= \tens{\ortho{P'}TP'Q a}{\ortho{Q}P'h} = \tens{\ortho{P'}TQ a}{\ortho{Q}P'h} 
\end{align*}
lies in $\module$.

Observing that   $\Phi\neq I$ and that $\max\{\Phi'\colon (\Phi',\Psi')\in \Omega_T\}=I$,  let  $Q$  be such that 
$$
\Phi<Q\leq \min\{\Phi'\colon \Phi<\Phi'\}.
$$
By Lemma \ref{l_px2}, we can assume that  $a \in \hilbsp$  is such that $\jproj{P}_a = \Phi$, $P_a = Q$ and $P_{Ta} = \sigma(\Psi^\perp)$. It follows that $Qa=a$, 
 $P_{TQa} = \sigma(\Psi^\perp)$ and, since $P' < \sigma(\Psi^\perp)$, 
$$
P_{TQa} = P_{\ortho{P'}TQa} = \sigma(\Psi^\perp).
$$ 
By Theorem \ref{t_allrank1}, it follows that all rank-1 operators in 
\begin{equation}\label{3.5}
P_{\ortho{P'}TQa}B(\H)\hat{P}_{\ortho{Q}P'h}=\sigma(\Psi^\perp) B(\H)\hat{P}_{\ortho{Q}P'h}
\end{equation}
lie in $\M$.

Since 
$$
\ortho{\Phi} = \vee\{N^\perp \colon N\in \N,  \Phi < N\},
$$
 there exists a sequence $(h_n)$ in 
$$
\bigcup_{N \in \nest, \Phi < N}\ortho{N}(\hilbsp)$$
  converging to $h$ in the norm topology.  Moreover, the sequence can be chosen such that, for each $n\in \nats$, there exists $Q_n \in \nest$ with $\Phi<Q_n$ and 
  $h_n = \ortho{Q_n} h_n = P' h_n$.
  
  Hence 
  $$
  \tens{\ortho{P'}TQ a}{h_n}=\tens{\ortho{P'}TQ a}{\ortho{Q_n}P'h_n}
  $$ 
lies in the set  \eqref{3.5}. 
Hence  $(\tens{\ortho{P'}TQ a}{\ortho{Q_n}P'h_n})$   is a convergent sequence in $\module$ whose limit $\tens{\ortho{P'}TQ a}{h}$ also lies in $\module$. Now an immediate application of  Theorem \ref{t_allrank1} yields the result.
 
Case 2. $\sigma(\Psi^\perp)\leq \Phi$. \\
Recall that $\Phi <I$ and let 
 $P' \in \nest$ be such that $\Phi < P'$. 
  Let $z \in \hilbsp$ and let $w \in \hilbsp$ be such that $\hat{P}_w=P'$ (cf. Proposition \ref{p_prelim}).
  Since, by  \cite[ Lemma 1]{OS}, 
$\sigma(\ortho{\Psi}) T \bigl(\sigma(\ortho{\Psi}) \bigr)^\perp\in \module$, and by \cite{ringrose1965}, Lemma 3.3, $\tens{P'z}{\ortho{P'}w} \in \nestalg$, it follows
\begin{align*}
\lieprod{\sigma(\ortho{\Psi}) T \bigl(\sigma(\ortho{\Psi}) \bigr)^\perp}{\tens{P'z}{\ortho{P'}w}} 
&= \sigma(\ortho{\Psi}) T \bigl(\sigma(\ortho{\Psi}) \bigr)^\perp(\tens{P'z}{\ortho{P'}w})\\
&\qquad   - (\tens{P'z}{\ortho{P'}w})\sigma(\ortho{\Psi}) T \bigl(\sigma(\ortho{\Psi}) \bigr)^\perp \\
&= \sigma(\ortho{\Psi}) T \bigl(\sigma(\ortho{\Psi}) \bigr)^\perp (\tens{P'z}{\ortho{P'}w}) 
\end{align*}
lies in $\module$.

 By Lemma \ref{l_px2},  $z \in \hilbsp$ can be chosen such that $\jproj{P}_z = \Phi$, $P_z = P'$ and $P_{Tz} = \sigma(\Psi^\perp)$. 
 Hence,   
$$P_{T\sigma(\Psi^\perp)^\perp P'z} = P_{T\Phi^\perp P'z} = \sigma(\Psi^\perp),
$$  
since $  \sigma(\Psi^\perp) \leq \Phi<P'$.

 An application Theorem \ref{t_allrank1} to 
$\sigma(\Psi^\perp)  T\bigl(\sigma(\ortho{\Psi}) \bigr)^\perp (\tens{P'z}{\ortho{P'}w})$ yields that $\M$ contains all rank-1 operators in $\sigma(\Psi^\perp)B(\H) \ortho{P'}$. Now, a density argument similar to that of Case 1., concludes the proof. 
\end{proof}

We are now ready to prove  Theorem \ref{rankndecomp}.

\begin{proof}  
Let $\module$ be a norm closed Lie $\nestalg$-module and let $T$ be a rank-n operator in $\module$.  
Applying Theorem \ref{sw_tight_n_decomp}, we get a decomposition $T = \sum_{r=1}^m \tens{x_r}{y_r}$ such that,

 for all  $r \in \{1,...,m \}$, there exists $(\Phi_r, \Psi_r)\in \Omega_T$ with 
 $\tens{x_r}{y_r} = \sigma(\Psi_r^\perp)(\tens{x_r}{y_r}) \ortho{\Phi_r}$. 
It finally follows  from Theorem \ref{rank1_module} that $\tens{x_r}{y_r} \in \module$.
\end{proof}

\end{document}